
\documentclass[]{interact}

\usepackage{epstopdf}

\usepackage[numbers,sort&compress]{natbib}
\bibpunct[, ]{[}{]}{,}{n}{,}{,}

\theoremstyle{plain}
\newtheorem{theorem}{Theorem}[section]
\newtheorem{lemma}[theorem]{Lemma}
\newtheorem{assumption}[theorem]{Assumption}
\newtheorem{corollary}[theorem]{Corollary}

\theoremstyle{definition}
\newtheorem{definition}[theorem]{Definition}

\theoremstyle{remark}

\usepackage{graphicx} 
\usepackage{epstopdf}

\usepackage{subfigure}

\usepackage{amsmath}
\usepackage{amssymb}

\usepackage{enumitem}

\newcommand{\dd}{{\mathrm{d}}}
\newcommand{\barr}{\begin{array}}
	\newcommand{\earr}{\end{array}}
\newcommand{\bvec}{ \left[ \!\! \barr{cccccccccccc} }
\newcommand{\evec}{ \earr \!\! \right] }

\newcommand{\LL}{{\mathcal L}}

\newcommand{\R}{{\mathbb{R}}}

\newcommand{\nm}{n_{\mathrm{m}}}
\newcommand{\nx}{n_{\mathrm{x}}}
\newcommand{\nU}{n_{\mathrm{u}}}

\renewcommand{\o}{k}
\newcommand{\pl}{{k+1}}

\newcommand{\eig}{s}

\usepackage{bbold}
\newcommand{\zero}{\mathbb{0}}
\newcommand{\eye}{\mathbb{1}}

\newcommand{\A}{\mathcal{A}}

\usepackage{color}

\newcommand{\F}{\mathcal{F}}
\newcommand{\Jin}{\tilde{J}_{\mathrm{IN}}}
\newcommand{\Jgn}{\tilde{J}_{\mathrm{GN}}}

\usepackage{algorithm}
\usepackage{algpseudocode}
\algrenewcommand\algorithmicrequire{\textbf{Input:}}
\algrenewcommand\algorithmicensure{\textbf{Output:}}

\usepackage{float}
\newfloat{algorithm}{t}{lop}

\usepackage{color}
\definecolor{wheat}{rgb}{0.96,0.87,0.70}



\begin{document}


\title{Adjoint-based SQP Method with Block-wise quasi-Newton Jacobian Updates for Nonlinear Optimal Control}

\author{
	\name{Pedro Hespanhol\textsuperscript{a,b} and Rien Quirynen\textsuperscript{a}}
	\affil{\textsuperscript{a}Control and Dynamical Systems, Mitsubishi Electric Research Laboratories, Cambridge, MA, 02139, USA.
		{\tt\small quirynen@merl.com}}
	\affil{\textsuperscript{b}Department of Industrial Engineering and Operations Research, University of California, Berkeley, CA 94720, USA.}
}

\maketitle

\begin{abstract}
	Nonlinear model predictive control~(NMPC) generally requires the solution of a non-convex optimization problem at each sampling instant under strict timing constraints, based on a set of differential equations that can often be stiff and/or that may include implicit algebraic equations. This paper provides a local convergence analysis for the recently proposed adjoint-based sequential quadratic programming~(SQP) algorithm that is based on a block-structured variant of the two-sided rank-one~(TR1) quasi-Newton update formula to efficiently compute Jacobian matrix approximations in a sparsity preserving fashion. A particularly efficient algorithm implementation is proposed in case an implicit integration scheme is used for discretization of the optimal control problem, in which matrix factorization and matrix-matrix operations can be avoided entirely. The convergence analysis results as well as the computational performance of the proposed optimization algorithm are illustrated for two simulation case studies of nonlinear MPC.
\end{abstract}

\begin{keywords}
	nonlinear model predictive control; sequential quadratic programming; quasi-Newton updates; convergence analysis; collocation methods
\end{keywords}

\begin{amscode}
	49K15; 49M37; 90C53; 65K05
\end{amscode}


\section{Introduction}

Optimization based control and estimation techniques have attracted an increasing attention over the past decades. They allow a model-based design framework, in which the system dynamics, performance metrics and constraints can directly be taken into account. Receding horizon techniques such as model predictive control~(MPC) and moving horizon estimation~(MHE) have been studied extensively because of their desirable properties~\cite{Mayne2013} and these optimization-based techniques have already been applied in a wide range of applications~\cite{Ferreau2017}. One of the main practical challenges in implementing such an optimization-based predictive control or estimation scheme, lies in the ability to solve the corresponding nonlinear and generally non-convex optimal control problem~(OCP) under strict timing constraints and typically on embedded hardware with limited computational capabilities and available memory. 
	
Let us consider the following continuous-time formulation of the optimal control problem that needs to be solved at each sampling instant
\begin{subequations} \label{eq:C}
	\begin{alignat}{4}
	\underset{x(\cdot),u(\cdot)}{\text{min}} \quad & \int_{0}^{T} \ell(x(t),u(t)) \,\mathrm{d}t &&  \label{C:obj}\\
	\text{s.t.} \quad\;\; & x_0 - \hat{x}_0 \;= \;0, 	 &&	\label{C:initial}\\
	& 0 \; =  \; f(\dot{x}(t),x(t),u(t)), \quad &&  \forall t \in [0,T], \label{C:path}\\
	& p(x(t),u(t)) \leq 0, \quad && \forall t \in [0,T], \label{C:set} 
	\end{alignat}
\end{subequations}
where $T$ denotes the control horizon length, $x(t) \in \mathbb{R}^{\nx}$ denotes the differential states and $u(t) \in \mathbb{R}^{\nU}$ are the control inputs. The function $\ell(\cdot)$ defines the stage cost and the nonlinear system dynamics are formulated as an implicit system of ordinary differential equations~(ODE) in~\eqref{C:path}, which could additionally be extended with implicit algebraic equations. A common assumption is that the resulting system of differential-algebraic equations~(DAE) is of index $1$~\cite{Bock2005}. The optimization problem is parametric, since it depends on the state estimate $\hat{x}_0$ at the current sampling instant, through the initial value condition in~\eqref{C:initial}. The path constraints are defined by the function $p(\cdot)$ in Eq.~\eqref{C:set} and, for simplicity of notation, they are further assumed to be affine. Note that a similar problem as in~\eqref{eq:C} needs to be solved for optimization-based state and parameter estimation, without the given initial state value.

In direct optimal control methods, one forms a discrete-time approximation of the continuous-time OCP in~\eqref{eq:C} based on an appropriate parameterization of the state and control trajectories over the time horizon $t \in [0,T]$, resulting in a tractable nonlinear program~(NLP) that needs to be solved. Popular examples of this approach include the direct multiple shooting method~\cite{bock1984multiple} and direct collocation~\cite{Betts2010,Biegler1984}. Note that these techniques often need to rely on implicit integration methods in order to deal with stiff and/or implicit systems of differential or differential-algebraic equations~\cite{Quirynen2017}. The resulting constrained optimization problem can be handled by standard Newton-type algorithms such as interior point methods~\cite{Waechter2006} and sequential quadratic programming~(SQP)~\cite{Boggs1995} techniques for nonlinear optimization~\cite{nocedal2006numerical}. 

Quasi-Newton optimization methods are generally popular for solving such a constrained NLP. They result in computationally efficient Newton-type methods that solve the first order necessary conditions of optimality, i.e., the Karush-Kuhn-Tucker~(KKT) conditions, without evaluating the complete Hessian of the Lagrangian and/or even without evaluating the Jacobian of the constraints~\cite{nocedal2006numerical}. Instead, quasi-Newton methods are based on low-rank update formulas for the Hessian and Jacobian matrix approximations~\cite{Dennis1977}. Popular examples of this approach include the Broyden-Fletcher-Goldfarb-Shanno~(BFGS)~\cite{Broyden1967} and the symmetric rank-one~(SR1) update formula~\cite{conn1991convergence} for approximating the Hessian of the Lagrangian. Similarly, quasi-Newton methods can be used for approximating Jacobian matrices, e.g., of the constraint functions, such as the good and bad Broyden methods~\cite{broyden2000discovery} as well as the more recently proposed two-sided rank-one~(TR1) update formula~\cite{griewank2002constrained}.

For the purpose of real-time predictive control and estimation, continuation-based online algorithms have been proposed that aim at further reducing the computational effort by exploiting the fact that a sequence of closely related parametric optimization problems is solved~\cite{Bock2005,Diehl2009c}. One popular technique consists of the real-time iteration~(RTI) algorithm that performs a single SQP iteration per time step, in combination with a sufficiently high sampling rate and a prediction-based warm starting in order to allow for closed-loop stability of the system~\cite{Diehl2005}. The RTI algorithm can be implemented efficiently based on (fixed-step) integration schemes with tailored sensitivity propagation for discretization and linearization of the system dynamics~\cite{Quirynen2017} in combination with structure-exploiting quadratic programming solvers~\cite{Ferreau2017}. In addition, a lifted algorithm implementation has been proposed in~\cite{quirynen2017lifted} to directly embed the iterative procedure of implicit integration schemes, e.g., collocation methods, within a Newton-type optimization framework for optimal control.

Unlike standard inequality constrained optimization, nonlinear optimal control problems typically result in a particular sparsity structure in the Hessian of the Lagrangian and in the Jacobian matrix for the equality constraints. In direct optimal control methods, the objective function is typically separable resulting in a block-diagonal Hessian matrix. This property has been exploited in partitioned quasi-Newton methods that approximate and update each of the Hessian block matrices separately, as proposed and studied in~\cite{Griewank1982,Griewank1982a,janka2016sr1}. On the other hand, the Jacobian matrix corresponding to the discretized system dynamics has a block bidiagonal sparsity structure, because of the stage wise coupling of the optimization variables at subsequent time steps of the control horizon. For this purpose, the present article analyzes a novel tailored quasi-Newton method for optimal control using a partitioned or block-structured TR1-based Jacobian update formula. This adjoint-based SQP method for nonlinear optimal control, based on a Gauss-Newton Hessian approximation in combination with inexact Jacobian matrices, was proposed recently in~\cite{Hespanhol2018}.

\subsection{Contributions and outline}
This paper provides a complete presentation of the block-TR1 based SQP method for nonlinear optimal control, including a detailed discussion of the lifted collocation type implementation, extending earlier work of the same authors in~\cite{Hespanhol2018}. Unlike the latter publication, a convergence analysis of this novel quasi-Newton type optimization algorithm is provided. More specifically, we prove convergence of the block-structured quasi-Newton Jacobian approximations to the exact Jacobian matrix within the null space of the active inequality constraints. Based on this result, under mild conditions, convergence of the overall inexact SQP method can be guaranteed. Locally linear or superlinear convergence rates can be shown, respectively, when using a Gauss-Newton or quasi-Newton based Hessian approximation scheme. In addition, it is shown how this convergence analysis extends to our lifted collocation implementation that avoids any matrix factorization or matrix-matrix operations. These convergence analysis results as well as the computational performance of the optimization algorithms are illustrated numerically for two simulation case studies of nonlinear MPC.


	
The paper is organized as follows. Section~\ref{sec:DOC} briefly introduces the direct multiple shooting based OCP problem formulation as well as the proposed adjoint-based inexact SQP method that is based on block-wise TR1 Jacobian updates. Section~\ref{sec:CONV} presents the detailed convergence analysis for the optimization method and contains the main theoretical results of the present paper. A numerically efficient implementation of the block-TR1 update formula in combination with a lifted Newton-type method for direct optimal control with implicit integration schemes such as, e.g., collocation methods, is then proposed and analyzed in Section~\ref{sec:TRD}. Finally, Section~\ref{sec:caseStudies} presents numerical results of the NMPC case studies and Section~\ref{sec:concl} concludes the paper.

\section{Block-wise TR1 based Sequential Quadratic Programming} \label{sec:DOC}

A popular approach for direct optimal control is based on direct multiple shooting~\cite{bock1984multiple} that performs a time discretization, based on a numerical integration scheme~\cite{Hairer1991} to solve the following initial value problem 
\begin{equation}
0 = f(\dot{x}(\tau),x(\tau),u(\tau)), \quad \tau \in [t_i,t_{i+1}], \quad x(t_i) = x_i, \label{eq:IVP}
\end{equation}
on each of $N$ shooting intervals that are defined by a grid of consecutive time points $t_i$ for $i = 0, \ldots, N$. For the sake of simplicity, we consider here an equidistant grid over the control horizon, i.e., $t_{i+1}-t_i = \frac{T}{N}$, and a piecewise constant control parametrization $u(\tau) = u_i$ for $\tau \in [t_i,t_{i+1})$ in~\eqref{eq:IVP}.
An explicit fixed-step integration scheme defines the discrete-time system dynamics $x_{i+1} = F_{i}(x_{i},u_{i})$ for the shooting interval $[t_i,t_{i+1}]$. For example, this can correspond to the popular Runge-Kutta method of order $4$~(RK4) as defined in~\cite{Hairer1991}. Based on the explicit discretization scheme, the resulting block-structured optimal control problem reads as
\begin{subequations} \label{MS-OCP}
	\begin{alignat}{5}
	\underset{X,\,U}{\text{min}} \quad &\sum_{i=0}^{N-1} l_i(x_i,u_i) + l_N(x_N)&& \label{MS:obj}\\
	\text{s.t.} \quad\; & \hat{x}_0 \;= \;x_0,  	\label{MS:initial}\\
	& F_{i}(x_{i},u_{i}) \;=\; x_{i+1}  , \quad &&i = 0, \ldots, N-1, \label{MS:Kstep}\\
	& P_i\, w_i \leq p_i, \quad &&i = 0, \ldots, N, \label{MS:path}
	\end{alignat}
\end{subequations}
where the affine path constraints~\eqref{MS:path} have been imposed on each of the shooting nodes and the compact notation $w_i := (x_i, u_i)$ for $i = 0, \ldots, N-1$ and $w_N := x_N$ is defined. Note that the optimization variables for the problem in~\eqref{MS-OCP} are directly the state $X=[x_0^\top,\ldots, x_N^\top]^\top$ and control trajectory $U=[u_0^\top,\ldots, u_{N-1}^\top]^\top$. 

\subsection{SQP algorithm with inexact Jacobians}

For a local minimum $w^*$ of the NLP in~\eqref{MS-OCP}, for which the linear independence constraint qualification~(LICQ) holds, there must exist a unique set of multiplier values $\lambda^*$, $\mu^{*}$ such that the following Karush-Kuhn-Tucker~(KKT) conditions are satisfied
\begin{subequations} \label{KKT-RK4}
	\begin{alignat}{5}
	\nabla_{w}\mathcal{L}(w^{*}, \lambda^*) + P^{T}\mu^{*} &= 0 && \label{KKT-RK4:DF1}\\
	F(w^{*}) &= X^{*} && \label{KKT-RK4:PF1}\\
	P\, w^{*} &\leq p && \label{KKT-RK4:PF2}\\
	\mu^{*} &\geq 0 && \label{KKT-RK4:DF2}\\
	\mu^{*}_j (P w^{*} - p)_j &= 0, \quad j = 1, \ldots, n_{\mathrm{p}}, \label{KKT-RK4:CS}
	\end{alignat}
\end{subequations}
where $F(\cdot)$ and $P$ are appropriate block-wise concatenations of the equality and inequality constraints, respectively, in~\eqref{MS:Kstep} and~\eqref{MS:path} and $n_{\mathrm{p}}$ denotes the total number of inequality constraints. Here, we also lumped the initial condition constraint as part of the matrix $P$ since we can represent a linear equality constraint as two linear inequality constraints. 
Lastly, $\mathcal{L}(w, \lambda)$ denotes the `truncated Lagrangian', omitting inequality constraints, and is therefore given by
\begin{equation}
\mathcal{L}(w, \lambda) = \sum_{i=0}^{N-1} \left( l_i(x_i,u_i) + \lambda_{i}^\top  (F_i(w_i) - x_{i+1}) \right) + l_N(x_N).
\end{equation}
Given the set of indices $\A$ for the inequality constraints that are active at the local minimum, the KKT system reduces to a nonlinear system of equations that can be solved directly by a Newton-type method.
In particular, we are interested in a quasi-Newton algorithm where we will approximate $\nabla_{ww}^{2}\mathcal{L}(w^k, \lambda^k)$ by a matrix $H^k$ and $\frac{\partial F}{\partial w}(w^k)$ by a matrix $A^k$. Namely, we solve the following linearized system
\begin{equation} \label{QP-KKT}
\begin{bmatrix}
H^k & A^{k^\top} - E^\top & P_A^\top \\
A^{k} - E & \text{ } & \text{ } \\
P_A & \text{} & \text{} 
\end{bmatrix} 
\begin{bmatrix}
\Delta w^k  \\ 
\Delta \lambda^{k} \\
\Delta \mu^{k}_{A}
\end{bmatrix} =
- \begin{bmatrix}
g(w^k,\lambda^k)  \\ 
F(w^k) - X^k \\
P_{A} \, w^k - p_{A}
\end{bmatrix},	
\end{equation}
where $g(w^{k},\lambda^k) = \nabla_{w} \mathcal{L}(w^k,\lambda^k) + \mu^{k^\top}_{A}(P_{A} \, w^k - p_{A})$ at each Newton-type iteration $k$. Note that the matrix $P_{A}$ is defined as the part of $P$ that corresponds to the inequality constraints~\eqref{KKT-RK4:PF2} in the active set $\A$, and $E$ denotes the constant matrix corresponding to the right-hand side of the equality constraints in~\eqref{KKT-RK4:PF1}.

In order to efficiently solve the inequality constrained OCP in~\eqref{MS-OCP}, let us consider the adjoint-based SQP algorithm with Gauss-Newton type Hessian approximation and inexact Jacobian information as introduced originally in~\cite{Bock2005,Wirsching2006} for fast nonlinear MPC. Each SQP iteration solves a convex QP subproblem
\begin{subequations} \label{SQP-MS}
	\begin{alignat}{4}
	\underset{\Delta W}{\text{min}} \quad &\sum_{i=0}^{N} \frac{1}{2} \Delta w_{i}^\top H_{i}^k\, \Delta w_{i} \,+\, h_{i}^{k^\top} \Delta w_{i} &&\label{SQP-MS:obj} \\
	\text{s.t.} \quad & \Delta x_0 \;= \; \hat{x}_0 - x_0^k,  	\label{SQP-MS:initial}\\
	& a_{i}^k + A_{i}^k\, \Delta w_{i} \;= \;\Delta x_{i+1}, \quad &&i = 0,\ldots, N-1, \label{SQP-MS:Kstep}  \\
	& P_i\, \Delta w_i \leq p_{i}^k, \qquad \quad\;\qquad &&i = 0,\ldots, N, \label{SQP-MS:path}
	\end{alignat}
\end{subequations}
where notation $\Delta W=[\Delta w_0^\top,\ldots, \Delta w_N^\top]^\top$ is used to denote the deviation variables $\Delta w_i := w_i - w_i^k$, given the current solution guess $X^k$, $U^k$ for the state and control trajectories at iteration $k$ of the adjoint-based SQP method. The function $p(\cdot)$ that defines the path constraint~\eqref{C:set} was assumed to be affine and $p_{i}^k := p_i - P_i\, w_i^k$.
Note that tracking formulations for nonlinear MPC typically include a stage cost that is defined by a (nonlinear) least squares term $l_{i}(x_{i},u_{i}) \;= \; \frac{1}{2}\Vert R(x_{i},u_{i}) \Vert_2^{2}$ for $i = 0, \ldots, N$. The generalized Gauss-Newton~(GGN) method from~\cite{bock1983recent} uses the block-structured Hessian approximation $H_i^k := \nabla R(w_i^k) \nabla R(w_i^k)^\top \approx \nabla_{w_i w_i}^2 \LL(\cdot)$. 

The matrix $A_{i}^k \approx \frac{\partial F_i}{\partial w_i}(w_i^k)$ denotes the Jacobian approximation and $a_i^k := F_i(w_i^k) - x_{i+1}^k$ for the discrete-time system dynamics in Eq.~\eqref{SQP-MS:Kstep}. For real-time NMPC, such a Jacobian approximation can be obtained by reusing information from a previous NLP solution~\cite{Bock2005,Wirsching2006}. The gradient term in the objective~\eqref{SQP-MS:obj} reads as
\begin{equation}
h_i^k := \nabla_{w_i} l(w_i^k) + \left(\frac{\partial F_i}{\partial w_i}(w_i^k) - A_{i}^k\right)^\top \lambda_i^k,  \label{eq:grad-MS}
\end{equation}
for $i= 0,\ldots, N-1$, in which $\lambda_i^k$ denotes the current value of the Lagrange multipliers for the nonlinear continuity constraints in~\eqref{MS:Kstep}.
Note that the linearized KKT conditions in~\eqref{QP-KKT} correspond to the KKT optimality conditions for the QP in~\eqref{SQP-MS}, for a fixed active set $\A$. In addition, each QP subproblem is convex because $H^k \succeq 0$, e.g., for the Gauss-Newton Hessian approximation. 
A full-step inexact SQP method will sequentially solve each QP subproblem~\eqref{SQP-MS} and perform the following updates:
\begin{equation}
w^{k+1} = w^{k} + \Delta w^{k} \;\text{ and }\; \lambda^{k+1} = \lambda^k + \Delta \lambda^{k} = \lambda^{k+1}_{QP},
\end{equation}
where $\lambda^{k+1}_{QP}$ denote the Lagrange multiplier values for Eq.~\eqref{SQP-MS:Kstep} at the QP solution.
We do not need to perform explicit updates for the Lagrange multipliers associated with the inequality constraints, because they are assumed to be affine, hence not impacting any computation on the QP formulation in~\eqref{SQP-MS}.

\subsection{Dynamic block-wise TR1 Jacobian updates}
At each SQP iteration, we perform the block-wise two-sided rank-one~(TR1) Jacobian update, as proposed recently in~\cite{Hespanhol2018}. Following the work in~\cite{griewank2002constrained}, given current Jacobian approximations $A^{\o}_{i}$ for $i = 0, \ldots, N-1$, we would like that each updated approximation matrix $A^{\pl}_{i}$ satisfies the following two secant conditions
\begin{equation}
\begin{aligned}
\text{Adjoint Condition (AC):} \quad \sigma_{i}^{\o^\top} A^{\pl}_{i} &= \gamma_{i}^{\o^\top}  \\
\text{Forward Condition (FC):} \quad\; A^{\pl}_{i}s_i^\o &= y_i^\o,
\end{aligned} \label{eq:conds}
\end{equation}
where we define the adjoint vector $\gamma_{i}^\o = \frac{\partial F_i}{\partial w_i}(w^{\pl}_i)^\top \sigma_{i}^\o$, given $\sigma_{i}^{\o^\top} = (\lambda^{\pl}_{i} - \lambda^{\o}_{i})^{\top}$, and the difference in function evaluations $y_i^\o = F(w^{\pl}_{i}) - F(w^{\o}_{i})$. Note that $\lambda^{\pl}_{i}$ and $\lambda^{\o}_{i}$, respectively, denote the new and old Lagrange multipliers for the linearized equality constraints in Eq.~\eqref{SQP-MS:Kstep}. Similarly, $w_i^{\o} := (x_i^{\o},u_i^{\o})$ and $w_i^{\pl} := w_i^{\o} + \Delta w_i^{\o}$ denote, respectively, the old and new primal variables, such that $s_i^\o := w_i^{\pl} - w_i^{\o}$. Note that the gradient $\gamma_{i}^\o = \frac{\partial F_i}{\partial w_i}(w^{\pl}_i)^\top \sigma_{i}^\o$ can be computed efficiently using the backward or adjoint mode of algorithmic differentiation~(AD), e.g., see~\cite{Griewank2000}.

The proposed block-wise TR1 update formula then reads as follows
\begin{equation}
A^{\pl}_{i} = A^{\o}_{i} + \alpha_i^\o \left(y_i^\o - A^{\o}_{i}s_{i}^\o\right)\left(\gamma_i^{\o^\top} - \sigma_{i}^{\o^\top} A^{\o}_{i} \right), \label{eq:BTR1}
\end{equation}
for $i = 0, \ldots, N-1$ and where $\alpha_i^\o$ is a scalar that will be defined further. Aside from the case where the function $F(\cdot)$ is affine, the two conditions in Eq.~\eqref{eq:conds} are not consistent with each other and they can therefore generally not both be satisfied by the updated matrix $A^{\pl}_{i}$ at each iteration. Thus, similar to the standard TR1 update in~\cite{griewank2002constrained}, the block-wise update will only be able to satisfy one or the other. In the adjoint variant of the update, the scaling value is defined as
\begin{equation}
\alpha_{\mathrm{A},i}^\o = \frac{1}{\sigma_i^{\o^\top} (y_{i}^\o - A^{\o}_{i}s_i^\o)}, \label{eq:alpha1}
\end{equation}
such that the adjoint condition in~\eqref{eq:conds} is satisfied exactly and the forward condition holds up to some accuracy. Similarly, this value reads as follows for the forward variant
\begin{equation}
\alpha_{\mathrm{F},i}^\o = \frac{1}{(\gamma_{i}^{\o^\top} - \sigma_i^{\o^\top} A^{\o}_{i}) \, s_i^\o}, \label{eq:alpha2}
\end{equation}
where the forward condition is satisfied exactly. It is interesting to note that, since we apply the block-wise TR1 update from~\eqref{eq:BTR1} for each shooting interval $i = 0, \ldots, N-1$, the resulting update for the complete constraint Jacobian matrix of the QP in~\eqref{SQP-MS} corresponds to a rank-$N$ update.

As in~\cite{griewank2002constrained}, we impose a skipping condition in order to avoid a potential blow-up of the block-wise TR1 update when the denominator of the scaling factor becomes small or even zero. For our purposes, the skipping condition itself depends on the type of formula that is used. We update the block matrix $A_i^\o$ only if the following holds
\begin{equation} \label{skip1}
\left| (\gamma_i^{\o^\top} - \sigma_{i}^{\o^\top} A_i^\o) s_i^\o \right| \geq c_1 \left\Vert \sigma_i^\o \right\Vert \, \left\Vert y_i^\o - A_i^\o s_{i}^\o\right\Vert,
\end{equation}
with $c_1 \in (0,1)$ if $\alpha_i^\o = \alpha_{\mathrm{F},i}^\o$ in the forward TR1 update, and
\begin{equation} \label{skip2}
\left| \sigma_{i}^{\o^\top}(y_i^\o - A_i^\o s_{i}^\o) \right| \geq c_1 \left\Vert s_i^\o \right\Vert \, \left\Vert \gamma_i^\o - A_i^{\o^\top} \sigma_{i}^\o \right\Vert,
\end{equation}
with $c_1 \in (0,1)$ if $\alpha_i^\o = \alpha_{\mathrm{A},i}^\o$ in the adjoint TR1 update. 
In addition, in order to consistently choose either the forward or adjoint Jacobian update formula, we propose a more dynamic variant of the algorithm that picks either $\alpha_{\mathrm{F},i}^\o$ or $\alpha_{\mathrm{A},i}^\o$ for each block matrix at any given iteration. It may not be clear what is the best approach to select which type of update is to be executed for each block matrix at a given iteration. However, in the next section, we prove the local convergence properties of the algorithm under any arbitrary sequence of updates that satisfy the skipping conditions in~\eqref{skip1} and~\eqref{skip2} for each block $i$ at every iteration $k$.

 \begin{algorithm}[h]
    	\caption{One iteration of SQP method with block-wise TR1 Jacobian updates.}
    	\label{alg:block_TR1}
    	\begin{algorithmic}[1]
    		\Require $w^{\o}_{i} = (x^{\o}_{i},u^{\o}_{i})$, $\lambda_{i}^{\o}$ and $A^{\o}_{i}$ for $i = 0, \ldots, N-1$.
    		\Statex \texttt{Problem linearization and QP preparation}
    		\State Formulate the QP in~\eqref{SQP-MS} with Jacobian matrices $A^{\o}_{i}$, Gauss-Newton Hessian approximations $H_i^{\o}$ and vectors $a_{i}^{\o}$, $p_{i}^{\o}$ and $h_{i}^{\o}$ in~\eqref{eq:grad-MS} for $i = 0, \ldots, N-1$. \vspace{1mm}		
    		\Statex \texttt{Computation of Newton-type step direction}
    		\State Solve the QP subproblem in Eq.~\eqref{SQP-MS} to update optimization variables:
    		\Statex $w_i^\pl \,\gets w_i^{\o} + \Delta w_i^{\o}$ and $\lambda_i^\pl \gets \lambda_i^{\o} + \Delta \lambda_i^{\o}$.  \Comment{full step} \vspace{1mm}
    		\Statex \texttt{Block-wise TR1 Jacobian updates}
    		\For{$i=0,\ldots,N-1$} \textbf{in parallel}
		\State Choose $\alpha_i^{\o} = \alpha_{\mathrm{F},i}^{\o}$ or $\alpha_i^{\o} = \alpha_{\mathrm{A},i}^{\o}$ via some decision rule.
    		\State $A^{\pl}_{i} \gets A^{\o}_{i} + \alpha_i^{\o} \left(y_i^{\o} - A^{\o}_{i}s_{i}^{\o}\right)\left(\gamma_i^{\o^\top} - \sigma_{i}^{\o^\top} A^{\o}_{i} \right)$.
    		\EndFor
    		\Ensure $w^{\pl}_{i} = (x^{\pl}_{i},u^{\pl}_{i})$, $\lambda_{i}^{\pl}$ and $A^{\pl}_{i}$ for $i = 0, \ldots, N-1$.
    	\end{algorithmic}
    \end{algorithm}

The complete adjoint-based SQP method that uses parallelizable block-wise TR1 Jacobian updates is summarized in Algorithm~\ref{alg:block_TR1}. Note that, for simplicity, the SQP algorithm is presented as a full-step method without any globalization or step-length selection strategies to ensure convergence to a local minimum~\cite{nocedal2006numerical}. This is also further motivated by the use of online algorithms for real-time nonlinear MPC as discussed in~\cite{Diehl2009c}.

\section{Convergence Results for Block-wise TR1-based SQP Method} \label{sec:CONV}

For the convergence analysis of sequential quadratic programming, it is standard to rely on a result that the active set, i.e., the set of active inequality constraints in the QP subproblems is stable in a neighbourhood around a local minimizer of the nonlinear program~\cite{nocedal2006numerical}. This allows us to study the local convergence properties of the block-TR1 based SQP method under the assumption that the active set has already been fixed, resulting, locally, in an equality constrained problem.

\subsection{Stability of the active set and local convergence}

Let us start by briefly repeating the result from~\cite{diehl2010adjoint} on the stability of the active set in the QP subproblems near the NLP solution and the corresponding conditions on local convergence properties for an adjoint-based SQP method with inexact Jacobians.
\begin{theorem} \label{AS-conv}
(Stability of active set and local convergence) Let the NLP solution vectors $w^*$, $\lambda^*$ be given and assume that:
\begin{enumerate}[label=(\roman*)]
\item at $w^*$ LICQ holds, and there exist Lagrange multiplier values $\mu^*$ such that $(w^*,\lambda^*,\mu^*)$ satisfies the KKT conditions in~\eqref{KKT-RK4}.
\item at $w^*$ strict complementarity holds, i.e., the multipliers $\mu_A^*$ of the active inequalities $P_A w^* = p_A$ satisfy $\mu_A^* > 0$, where $P_A$ is a matrix consisting of all rows of $P$ that correspond to the active inequalities at the NLP solution.
\item there are two sequences of uniformly bounded matrices $(A^k,H^k)$, each $H^k$ positive semidefinite on the null space of $A^k$, such that the sequence of matrices
\begin{equation}
J^k := \begin{bmatrix}
N^{\top}H^k & N^{\top}A^{k^\top}\\
A^{k} & \text{ }\\
P_A & \text{}
\end{bmatrix} \approx \frac{\partial \F}{\partial y}(y^k), \quad\text{where}\quad \F(y) := \begin{bmatrix} N^{\top} \nabla_w \LL(w,\lambda) \\ F(w) - X \\ P_A\, w - p_A \end{bmatrix}, \nonumber
\end{equation}
is uniformly bounded and invertible with a uniformly bounded inverse. Here, $N$ is a null space matrix with appropriate dimensions with orthonormal column vectors such that $N^\top N = \eye$ and $P_A\,N = 0$.
\item there is a sequence of iterates $y^k := (w^k,\lambda^k)$ generated according to
\begin{equation}
w^{k+1} = w^{k} + \Delta w^{k} \;\text{ and }\; \lambda^{k+1} = \lambda^k + \Delta \lambda^{k} = \lambda^{k+1}_{QP}, \nonumber
\end{equation}
where $\Delta w^k$ is the primal solution of the QP subproblem in~\eqref{SQP-MS} and $\lambda^{k+1}_{QP}$ denote the Lagrange multipliers corresponding to the equality constraints~\eqref{SQP-MS:Kstep}. Each iteration can be written in compact form as $y^{k+1} = y^{k} - J^{k^{-1}} \F(y^k)$.
\item there exists $\kappa < 1$ such that, for all $k \in \mathbb{N}$, it can be guaranteed that
\begin{equation}
\left\Vert J^{{k+1}^{-1}} \left( J^{k} - \frac{\partial \F}{\partial y}(y^k + t\Delta y^k)\right)\Delta y^k \right\Vert \leq \kappa \Vert  \Delta y^k \Vert , \quad \forall t \in [0, 1].
\end{equation}
\end{enumerate}
Then, there exists a neighbourhood $\bar{\mathcal{N}}$ of $(w^*,\lambda^*)$ such that for all initial guesses $(w^0,\lambda^0) \in\bar{\mathcal{N}}$ the sequence $(w^k, \lambda^k)$ converges q-linearly towards $(w^*, \lambda^*)$ with rate $\kappa$, and the solution of each QP~\eqref{SQP-MS} has the same active set as $w^*$.
\end{theorem}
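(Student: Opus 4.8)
The plan is to split the statement into the two ingredients that are standard in the local convergence theory of Newton-type methods: a contraction estimate for the iteration map $y \mapsto y - J^{-1}\F(y)$, and a perturbation argument showing that each QP subproblem predicts exactly the active set $\A$. First I would check that $y^* := (w^*,\lambda^*)$ is a root of the reduced map $\F$ from assumption (iii). Multiplying the stationarity condition~\eqref{KKT-RK4:DF1} by $N^\top$ and using $P_A N = 0$ annihilates the term $P_A^\top \mu_A^*$, so $N^\top \nabla_w \LL(w^*,\lambda^*) = 0$; together with the dynamics $F(w^*)-X^*=0$ and the active inequalities $P_A w^* - p_A = 0$ this gives $\F(y^*)=0$, while the multipliers $\mu_A^*$ are recovered separately from the full stationarity using LICQ. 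The invertibility of $\partial\F/\partial y$ at $y^*$ in (iii) makes this root locally isolated.

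For the contraction, I would follow the local contraction argument underlying~\cite{diehl2010adjoint}. By construction of the QP step in assumption (iv), the current residual satisfies $\F(y^k) = -J^k\,\Delta y^k$. Expanding the residual at the next iterate along the step gives
\begin{equation}
\F(y^{k+1}) = \F(y^k) + \int_0^1 \frac{\partial \F}{\partial y}(y^k + t\,\Delta y^k)\,\Delta y^k\,\dd t = \int_0^1 \left( \frac{\partial \F}{\partial y}(y^k + t\,\Delta y^k) - J^k \right)\Delta y^k\,\dd t. \nonumber
\end{equation}
Multiplying by $-J^{{k+1}^{-1}}$ identifies the next step $\Delta y^{k+1}$ as the integral of $J^{{k+1}^{-1}}\left( J^k - \partial \F/\partial y \right)\Delta y^k$, so taking norms and applying assumption (v) under the integral sign yields the step contraction $\Vert \Delta y^{k+1}\Vert \leq \kappa\,\Vert \Delta y^k\Vert$. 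Since $\kappa<1$, the steps decay geometrically, the iterates form a Cauchy sequence satisfying $\Vert y^{k+1}-y^*\Vert \leq \tfrac{\kappa}{1-\kappa}\Vert\Delta y^k\Vert$, and the uniform boundedness of $J^{-1}$ from (iii) together with continuity of $\F$ forces the limit to satisfy $\F = 0$, hence to equal $y^*$; q-linear convergence with rate $\kappa$ follows. I would keep the whole estimate inside a single neighbourhood $\bar{\mathcal N}$ on which (iii) and (v) hold uniformly.

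For the active-set claim, I would invoke a continuity/perturbation argument that rests only on (i) and (ii). Because the inactive inequalities satisfy $P_i w^* - p_i < 0$ strictly and the active multipliers satisfy $\mu_A^* > 0$ strictly, and because the primal QP solution together with its multipliers depends continuously on the linearization point $w^k$ (by LICQ and the invertibility in (iii)), there is a neighbourhood of $w^*$ on which every inactive constraint retains strict slack in the QP and every active multiplier stays positive. Hence each QP~\eqref{SQP-MS} identifies precisely the active set $\A$, which is exactly what legitimizes the reduced map $\F$ with fixed $P_A$.

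The main obstacle is the apparent circularity between the two parts: the contraction estimate is carried out for the equality-constrained reduced system $\F$ with $\A$ held fixed, yet that reduction is only valid once we know each QP reproduces $\A$. I would resolve this by ordering the arguments so that active-set stability is established \emph{first}, purely from strict complementarity and LICQ through continuity of the QP map, independently of convergence; after shrinking $\bar{\mathcal N}$ so that the QP active set equals $\A$ throughout, the contraction estimate on the reduced system applies unchanged. Making the two neighbourhood choices compatible — small enough for active-set identification and small enough for the uniform $\kappa$-bound — is the delicate bookkeeping, but since the result is quoted from~\cite{diehl2010adjoint} I would reference that source for the explicit constants rather than reconstruct them.
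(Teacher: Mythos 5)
Your proposal is correct and takes essentially the same route as the paper's own treatment: the paper does not prove Theorem~\ref{AS-conv} itself but defers entirely to \cite{diehl2010adjoint}, whose argument is precisely your two-part scheme --- active-set stability established first from LICQ and strict complementarity via continuity of the QP solution map (uniformly over the bounded matrix sequences), then the local contraction estimate $\Vert \Delta y^{k+1} \Vert \leq \kappa \Vert \Delta y^k \Vert$ derived from $\F(y^k) = -J^k \Delta y^k$ and assumption (v) on the reduced equality-constrained system with the active set held fixed. Your resolution of the apparent circularity and your deferral to the cited source for the explicit neighbourhood constants match the structure of that reference, so nothing further is needed.
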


In addition to the latter result that guarantees a q-linear local convergence rate in a neighbourhood of the NLP solution, the following theorem states a condition under which q-superlinear local convergence can be obtained instead.

\begin{theorem} \label{superlinear}
	(Superlinear convergence)
	If the equality
	\begin{equation}
	\underset{k \rightarrow \infty}{\text{lim}} \begin{bmatrix}
	N^{\top}H^k N & N^{\top}A^{k^\top}\\
	A^{k} N & \zero
	\end{bmatrix} = \begin{bmatrix}
	N^{\top}\nabla_{ww}^{2}\mathcal{L}(w^\star, \lambda^\star) N & N^{\top}\frac{\partial F}{\partial w}(w^\star)^\top\\
	\frac{\partial F}{\partial w}(w^\star) N & \zero
	\end{bmatrix},
	\end{equation}
	holds in addition to the assumptions of Theorem~\ref{AS-conv}, then the local convergence rate is q-superlinear instead.
\end{theorem}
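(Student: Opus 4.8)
The plan is to upgrade the q-linear rate already established in Theorem~\ref{AS-conv} to a q-superlinear one by verifying a Dennis--Mor\'e type condition for the Newton-type recursion $y^{k+1} = y^k - J^{k^{-1}}\F(y^k)$. First I would set $s^k := \Delta y^k = y^{k+1} - y^k$, $e^k := y^k - y^\star$ and $J^\star := \frac{\partial\F}{\partial y}(y^\star)$, and use the defining relation $\F(y^k) = -J^k s^k$ together with a Taylor expansion of $\F$ along the step (relying on Lipschitz continuity of $\frac{\partial\F}{\partial y}$) to obtain
\begin{equation}
\F(y^{k+1}) = \bigl(J^\star - J^k\bigr)s^k + \int_0^1 \Bigl(\tfrac{\partial\F}{\partial y}(y^k + t s^k) - J^\star\Bigr)s^k \,\dd t. \nonumber
\end{equation}
The integral is $O(\Vert e^k\Vert\,\Vert s^k\Vert)$, and since the q-linear rate makes $\Vert s^k\Vert$ and $\Vert e^k\Vert$ comparable while $J^\star$ is invertible with bounded inverse by assumption~(iii), it will follow that $\Vert e^{k+1}\Vert \leq C\Vert\F(y^{k+1})\Vert$. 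Hence the whole theorem reduces to the Dennis--Mor\'e estimate $\Vert(J^k - J^\star)s^k\Vert = o(\Vert s^k\Vert)$.

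The key structural observation, which is where the hypothesis is put to use, is that once the active set has stabilised the QP~\eqref{SQP-MS} enforces $P_A\,\Delta w^k = p_A - P_A w^k$, so that $P_A w^{k+1} = p_A$ after a single iteration and the primal step lies in the null space of $P_A$. I would therefore write $\Delta w^k = N\xi^k$ with $\Vert\xi^k\Vert = \Vert\Delta w^k\Vert \leq \Vert s^k\Vert$, so $s^k = (N\xi^k,\,\Delta\lambda^k)$, and expand $(J^k - J^\star)s^k$ block by block. The block row associated with $P_A$ cancels, being exact and iteration-independent, and with $\delta H^k := H^k - \nabla_{ww}^2\LL(w^\star,\lambda^\star)$ and $\delta A^k := A^k - \frac{\partial F}{\partial w}(w^\star)$ the surviving contributions collapse precisely onto the projected quantities
\begin{equation}
N^\top \delta H^k\, N\,\xi^k, \qquad N^\top (\delta A^k)^\top \Delta\lambda^k, \qquad \delta A^k\, N\,\xi^k. \nonumber
\end{equation}

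Each of these three terms is $o(\Vert s^k\Vert)$ by the limit assumed in the theorem: the first because $N^\top H^k N \to N^\top\nabla_{ww}^2\LL(w^\star,\lambda^\star)N$, and the second and third because $\delta A^k N = A^k N - \frac{\partial F}{\partial w}(w^\star)N \to 0$ (the second term using its transpose, bounded by $\Vert\delta A^k N\Vert\,\Vert\Delta\lambda^k\Vert$). Summing the finitely many blocks yields the Dennis--Mor\'e estimate, and combining with the first paragraph gives $\Vert e^{k+1}\Vert = o(\Vert e^k\Vert)$, i.e.\ q-superlinear convergence. The main obstacle is the second step: the hypothesis controls only the Hessian and Jacobian \emph{projected} onto $N$, whereas a component of $\Delta w^k$ transverse to $\ker P_A$ would feed the non-projected cross blocks $\delta H^k Z$ and contribute an $O(\Vert e^k\Vert)$ error that is not $o(\Vert e^k\Vert)$; it is exactly the finite-termination property $P_A w^{k+1} = p_A$ guaranteed by active-set stability in Theorem~\ref{AS-conv} that removes this transverse component and lets the projected convergence hypothesis suffice.
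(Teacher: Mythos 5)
The paper gives no in-text proof of Theorem~\ref{superlinear}---it defers entirely to~\cite{diehl2010adjoint}---and your reconstruction is essentially the argument used there: a Dennis--Mor\'e condition for the iteration $y^{k+1} = y^k - J^{k^{-1}}\F(y^k)$, made sufficient by the key observation that active-set stability forces $P_A\,\Delta w^k = 0$ after one step, so the primal steps lie in $\mathrm{range}(N)$ and the hypothesis on the projected blocks controls exactly the surviving terms $N^\top \delta H^k N \xi^k$, $(\delta A^k N)^\top \Delta\lambda^k$ and $\delta A^k N \xi^k$. One small repair: assumption~(iii) bounds $J^{k^{-1}}$, not $J^{\star^{-1}}$, so rather than invoking it for $\Vert e^{k+1}\Vert \leq C\,\Vert \F(y^{k+1})\Vert$ you should either write $e^{k+1} = J^{k^{-1}}\left((J^k - J^\star)\,e^k + o(\Vert e^k\Vert)\right)$---noting that $w^k - w^\star \in \mathrm{range}(N)$ for $k \geq 1$, so the same block cancellation applies to $e^k$---and use the uniform bound on $J^{k^{-1}}$, or argue separately that $J^\star$ is invertible (which follows from LICQ together with the assumed limit and the uniform invertibility in~(iii), but is not an explicit hypothesis as you state it).
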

The proofs for both Theorem~\ref{AS-conv} and~\ref{superlinear} can be found in~\cite{diehl2010adjoint} for an adjoint-based SQP method with inexact Jacobians that matches our problem formulation.

\subsection{Convergence of the block-wise TR1 Jacobian updates}

Theorem~\ref{AS-conv} holds for a general class of constraint Jacobian and Hessian approximation matrices $(A^k,H^k)$. Therefore, we have to show that our block-wise TR1 updates produce a sequence of block-structured matrices that converge to the exact Jacobian, which is itself block-structured, projected onto the null space of the active inequality constraint matrix $P_A$. Namely, defining a null space matrix $N$ as in Theorem~\ref{AS-conv}, we need to prove that the following holds
\begin{equation}
\lim_{k\rightarrow \infty} \left\Vert \left(A^{k}_{i} - \frac{\partial F_i}{\partial w}(w^*_{i}) \right) N_i \right\Vert = 0, \quad \forall i = 0,\ldots,N-1,
\end{equation}
where $N_i$ is the projection of the null space matrix $N$ in the variable space corresponding to block $i$. 
The only non-zero entries that are inexact in the Jacobian approximation matrix $A^k$ are those corresponding to the block-TR1 matrices $A^{k}_{i}$, $i = 0,\ldots,N-1$. 

\begin{assumption} Let us make the following assumptions:
\begin{itemize}
	\item[(AS1)] The Lagrangian function is twice continuously differentiable.
	\item[(AS2)] The function $\nabla_{w} F(w)$ is Lipschitz continuous, i.e., there exists a constant $c_3$ such that $
	\Vert \nabla_{w} F(w_1) - \nabla_{w} F(w_2) \Vert \le c_3 \Vert w_1 - w_2 \Vert
	$, for any $w_1$, $w_2$.
	\item[(AS3)] Let $\{(w^k,\lambda^k)\}$ be a sequence of iterates generated by our block-TR1 based SQP method in Algorithm~\ref{alg:block_TR1}, with a corresponding sequence of update parameters $\{\alpha^{k}_{i}\}$, while satisfying the skipping criterion in eqs.~\eqref{skip1}-\eqref{skip2}. 
	\item[(AS4)] The SQP iterates $\{(w^{k},\lambda^{k})\}$ converge to a limit point $(w^*,\lambda^*)$.
	\item[(AS5)] There is $k_0$ such that the active set is stable for all iterates $k \ge k_0$.
	\item[(AS6)] For each block $i$, the sequence of projections of $\{s^{k}\}$ on the subspace associated with block $i$, namely $\{s^{k}_{i}\}$ is uniformly linearly independent in the projected null space $N_i$, i.e., there exist $c_4>0$ and $l \geq q_i$ such that, for each $k_i \geq k_0$, there exist $q_i$ distinct indices $k^{j}_{i}$ with $k_i \leq k^{1}_{i} < \ldots < k^{q_i}_{i} \leq k_i + l$, $s^{k^{j}_{i}}_{N,i} \in \mathbb{R}^{q_i}$, $s^{k^{j}_{i}}_{i} = N_{i} s^{k^{j}_{i}}_{N,i}$, $j=1,\ldots,q_i$ and the minimum singular value $\sigma_{min}(S^{k_i}_{N_i})$ of the matrix
\begin{equation}
S^{k_{i}}_{N_i} = \begin{bmatrix}
\frac{s^{k^{1}_{i}}_{N,i}}{\Vert s^{k^{1}_{i}}_{N,i} \Vert} & \ldots & \frac{s^{k^{q_i}_{i}}_{N,i}}{\Vert s^{k^{q_i}_{i}}_{N,i} \Vert}
\end{bmatrix}
\end{equation}
is bounded below by $c_4$, i.e., $\sigma_{min}(S^{k_i}_{N_i}) \geq c_4$.
\end{itemize}
\label{as:conv}
\end{assumption}

Note that the assumptions $(AS1)$-$(AS5)$ are relatively mild and quite standard in Newton-type convergence analysis of SQP methods~\cite{diehl2010adjoint}. Especially, condition~$(AS5)$ holds due to the local stability result in Theorem~\ref{AS-conv} for the active set near the NLP solution. Even though~$(AS6)$ seems relatively strong, a very similar assumption is made in existing convergence results for quasi-Newton type matrix update schemes~\cite{conn1991convergence,diehl2010adjoint}. Here, we only require uniform linear independence inside each block $i$. 
We proceed now to prove the convergence of the quasi-Newton block-structured constraint Jacobian approximation matrices, using ideas from~\cite{conn1991convergence} and~\cite{diehl2010adjoint}. We start by first showing an intermediate result in the following lemma.

\begin{lemma}
	Given $(AS1)$-$(AS3)$ in Assumption~\ref{as:conv}, then the following holds for each Jacobian block matrix approximation
	\begin{subequations}
	\begin{alignat}{5}
	\left\Vert  y^{k}_{i} - A^{l}_{i}s^{k}_{i} \right\Vert &\leq \frac{c_3}{c_1}\left(\frac{2}{c_1^2} + 1 \right)^{l-k} \eta^{l,k}_{i} \Vert s^{k}_{i}\Vert, \quad &&\forall l \geq k +1, \label{eq:bound_f} \\
	\left\Vert \gamma^{k}_{i} - A^{l^\top}_{i} \sigma^{k}_{i}\right\Vert &\leq \frac{c_3}{c_1}\left(\frac{2}{c_1^2} + 1 \right)^{l-k} \eta^{l,k}_{i} \Vert \sigma^{k}_{i}\Vert, \quad &&\forall l \geq k +1, \label{eq:bound_a}
	\end{alignat}
	\end{subequations}
	where $i = 0, \ldots, N-1$ and $\eta^{l,k}_{i} = \max\{ \Vert  w^{p}_{i} - w^{s}_{i}\Vert  \text{  }  | \text{  } k \leq s \leq p \leq l\}$ is defined.
	
	\label{lem:first}
\end{lemma}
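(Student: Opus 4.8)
The plan is to prove the two inequalities \eqref{eq:bound_f}--\eqref{eq:bound_a} \emph{simultaneously} by induction on the gap $l-k\geq 1$ for a fixed $k$, treating the forward residual $\Vert y_i^k - A_i^{l}s_i^k\Vert$ and the adjoint residual $\Vert \gamma_i^k - A_i^{l^\top}\sigma_i^k\Vert$ as a coupled pair, following the template of \cite{conn1991convergence,diehl2010adjoint}. The engine of the argument is a second-order \emph{consistency estimate} coming from the fundamental theorem of calculus together with $(AS2)$: writing $y_i^k=\int_0^1 \frac{\partial F_i}{\partial w}(w_i^k+t s_i^k)\,s_i^k\,\dd t$ and $\gamma_i^k=\frac{\partial F_i}{\partial w}(w_i^{k+1})^\top\sigma_i^k$, Lipschitz continuity of $\nabla_w F$ yields both a directional bound $\Vert y_i^k-\frac{\partial F_i}{\partial w}(w_i^{l})s_i^k\Vert\leq c_3\,\eta_i^{l,k}\Vert s_i^k\Vert$ (the displacement $\Vert w_i^k+ts_i^k-w_i^{l}\Vert$ stays inside the window $[k,l]$) and the scalar inconsistency
\begin{equation}
\left| \sigma_i^{k^\top} y_i^k - \gamma_i^{k^\top} s_i^k \right| \;\leq\; \tfrac{c_3}{2}\,\Vert\sigma_i^k\Vert\,\Vert s_i^k\Vert^2 .
\nonumber
\end{equation}
This last quantity is exactly what survives because the forward and adjoint secant conditions in \eqref{eq:conds} cannot both be met by $A_i^{k+1}$.

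For the base case $l=k+1$ I would use that $A_i^{k+1}$ results from a single block-TR1 update \eqref{eq:BTR1}, which by the choice of scaling \eqref{eq:alpha1} or \eqref{eq:alpha2} satisfies \emph{exactly} whichever of (FC)/(AC) corresponds to the performed variant, so one of the two residuals vanishes identically. Substituting \eqref{eq:BTR1} into the \emph{other} residual, it factors as the pre-update self-residual times a scalar of the form $1-\tfrac{(\cdot)}{(\cdot)}$ whose numerator is precisely the inconsistency bounded above, while the active skipping condition \eqref{skip1} or \eqref{skip2} lower-bounds the denominator by $c_1$ times the relevant norms. Since $\eta_i^{k+1,k}=\Vert s_i^k\Vert$, this produces the constant $\tfrac{c_3}{c_1}$ and the first power $\left(\tfrac{2}{c_1^2}+1\right)^{1}$.

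For the inductive step I would expand $A_i^{l+1}$ through the update at step $l$, giving
\begin{equation}
y_i^k - A_i^{l+1}s_i^k = \left(y_i^k - A_i^{l}s_i^k\right) - \alpha_i^{l}\left(y_i^{l} - A_i^{l}s_i^{l}\right)\left[\left(\gamma_i^{l} - A_i^{l^\top}\sigma_i^{l}\right)^{\!\top} s_i^k\right].
\nonumber
\end{equation}
Taking norms and applying Cauchy--Schwarz to the rank-one correction, the scaling $\alpha_i^{l}$ is controlled by the active skip condition: for a forward update the factor $|\alpha_{\mathrm{F},i}^{l}|\,\Vert y_i^{l}-A_i^{l}s_i^{l}\Vert$ is bounded by $1/(c_1\Vert\sigma_i^{l}\Vert)$, so the current-step forward self-residual cancels and only $\Vert\gamma_i^{l}-A_i^{l^\top}\sigma_i^{l}\Vert$ survives; for an adjoint update the symmetric cancellation via \eqref{skip2} occurs. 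Reducing the surviving current-step term back to the induction hypothesis (this is where the forward and adjoint inequalities feed one another) yields a per-step growth factor of $\left(\tfrac{2}{c_1^2}+1\right)$, the two $1/c_1$ coming from the two skip conditions and the doubling from the triangle inequality, while monotonicity $\eta_i^{l,k}\leq\eta_i^{l+1,k}$ lets the window factor absorb the newly added step. The adjoint inequality \eqref{eq:bound_a} follows verbatim from the transpose of this computation with the roles of $s_i^\cdot$ and $\sigma_i^\cdot$ interchanged.

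The main obstacle I anticipate is the coupling together with the control of the current-step (``self'') residual terms $\Vert y_i^{l}-A_i^{l}s_i^{l}\Vert$ and $\Vert\gamma_i^{l}-A_i^{l^\top}\sigma_i^{l}\Vert$ that appear in the rank-one correction: the skipping conditions \eqref{skip1}--\eqref{skip2} guaranteed by $(AS3)$ are exactly what prevent $\alpha_i^{l}$ from blowing up and what force the correct self-residual to cancel, but turning the remaining term into the complementary induction quantity, and carrying out the constant bookkeeping so that precisely $\tfrac{2}{c_1^2}+1$ appears at each step, is the delicate part. The remaining estimates are routine triangle-inequality and Cauchy--Schwarz bounds built on the $(AS2)$ consistency estimate.
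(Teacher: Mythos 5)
Your overall architecture---induction over the sequence of updates, a base case exploiting that the executed variant satisfies one secant condition in \eqref{eq:conds} exactly while the other residual factors as the pre-update residual times a scalar $1-\tfrac{(\cdot)}{(\cdot)}$, and Lipschitz consistency estimates from $(AS2)$---is the same as the paper's, and your base case is essentially right. One caveat already there: bounding the cross residual after an adjoint update requires \emph{both} skipping conditions simultaneously; the paper uses $\vert(\sigma_i^k,\rho_i^k)\vert\geq c_1\Vert s_i^k\Vert\,\Vert\tau_i^k\Vert$ from \eqref{skip2} together with $\Vert\rho_i^k\Vert\leq\vert(\tau_i^k,s_i^k)\vert/(c_1\Vert\sigma_i^k\Vert)$ from \eqref{skip1}. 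This is covered because $(AS3)$ assumes both conditions hold at every accepted update, not only the one matching the executed variant, as your phrasing (``the active skipping condition \eqref{skip1} or \eqref{skip2}'') suggests.

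The genuine gap is in your inductive step. After expanding $y_i^k-A_i^{l+1}s_i^k$ and cancelling $\Vert\rho_i^l\Vert$ against the skip-condition denominator, you apply Cauchy--Schwarz to $\vert\tau_i^{l\,\top}s_i^k\vert$ and plan to feed the surviving \emph{self}-residual $\Vert\tau_i^l\Vert=\Vert\gamma_i^l-A_i^{l\,\top}\sigma_i^l\Vert$ back into the complementary induction hypothesis. But \eqref{eq:bound_a} only covers mixed-index pairs with $l\geq k+1$; the gap-zero, pre-update residual at step $l$ is precisely what the lemma never controls: $A^0$ is arbitrary, and combining \eqref{skip1}--\eqref{skip2} yields only the vacuous ratio $\Vert\tau_i^l\Vert\leq\Vert\tau_i^l\Vert/c_1^2$, so no absolute bound on $\Vert\tau_i^l\Vert$ (or $\Vert\rho_i^l\Vert$) exists under $(AS1)$--$(AS3)$, and your coupled induction does not close. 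The paper avoids this by never separating the cross inner product: it splits $(\tau_i^l,s_i^k)=\bigl[(\gamma_i^l,s_i^k)-(\sigma_i^l,y_i^k)\bigr]+\sigma_i^{l\,\top}\bigl(y_i^k-A_i^l s_i^k\bigr)$, bounds the first bracket by $c_3\,\eta_i^{l+1,k}\Vert\sigma_i^l\Vert\,\Vert s_i^k\Vert$ via the mean-value theorem (your same-index inconsistency estimate generalized to mixed indices $l,k$), and the second bracket by $\Vert\sigma_i^l\Vert$ times the induction hypothesis for \eqref{eq:bound_f} \emph{itself}; the factor $\Vert\sigma_i^l\Vert$ then cancels against the skip-condition lower bound and $\Vert\rho_i^l\Vert$ cancels exactly, so no self-residual ever needs to be bounded. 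A by-product is that the induction for \eqref{eq:bound_f} is self-contained across both update variants (and across switches between them), with no coupling to \eqref{eq:bound_a}, which the paper then obtains by the symmetric argument rather than by a simultaneous two-inequality induction.
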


\begin{proof}
	Our proof follows closely the proof of Lemma~4.1 in~\cite{diehl2010adjoint} but extended to our block-structured method and generalized to include both the forward and adjoint TR1 Jacobian update formulas.\\
	\\
	\emph{Step~1: Eq.~\eqref{eq:bound_f} based on forward Jacobian update}\\
	We start by showing the result of Eq.~\eqref{eq:bound_f} when $\alpha_i = \alpha_{\mathrm{F},i}$. The proof is by induction for each block $i = 0, \ldots, N-1$. For $l = k+1$, we know that $y^{k}_{i} - A^{l}_{i}s^{k}_{i} = 0$ based on the forward update. Assume that the result in~\eqref{eq:bound_f} holds for all $j = k+1, \ldots, l$. Then, we have the following
	\begin{equation}
	\begin{aligned}
	\Vert y^{k}_{i} - A^{l+1}_{i}s^{k}_{i} \Vert &= \left \Vert y^{k}_{i} - A^{l}_{i} s^{k}_{i} - \alpha_{\mathrm{F},i}^l\, \rho^{l}_{i} \tau^{l^\top}_{i} s^{k}_{i} \right\Vert \\
	&\leq \Vert  y^{k}_{i} - A^{l}_{i}s^{k}_{i} \Vert + \left| \frac{(\tau^{l}_{i},s^{k}_{i})}{(\tau^{l}_{i},s^{l}_{i})} \right| \text{ } \Vert \rho^{l}_{i} \Vert,
	\end{aligned}
	\end{equation}
	where $\tau^{l}_{i} = \gamma^{l}_{i} - A^{l^\top}_{i} \sigma^{l}_{i}$ and $\rho^{l}_{i} = y^{l}_{i} - A^{l}_{i}s^{l}_{i}$ such that $\alpha_{\mathrm{F},i}^l = \frac{1}{(\tau^{l}_{i},s^{l}_{i})}$. Then, using the result in Eq.~\eqref{eq:bound_f}, we can write
	\begin{equation}
	\begin{aligned}
	\left| (\tau^{l}_{i},s^{k}_{i}) \right| &= \left| (\gamma^{l}_{i} - A_{i}^{l^\top}\sigma^{l}_{i},s^{k}_{i}) \right| \leq  \left| (\gamma^{l}_{i} , s^{k}_{i}) - (\sigma^{l}_{i}, y^{k}_{i}) \right| + \left| (\sigma^{l}_{i},y^{k}_{i}) -  (\sigma^{l}_{i} , A^{l}_{i}s^{k}_{i}) \right| \\
	& \leq \left| (\gamma^{l}_{i} , s^{k}_{i}) - (\sigma^{l}_{i}, y^{k}_{i}) \right| +  \frac{c_3}{c_1}\left(\frac{2}{c_1^2} + 1 \right)^{l-k} \eta^{l,k}_{i} \left\Vert \sigma^{l}_{i} \right\Vert \left\Vert s^{k}_{i} \right\Vert.
	\end{aligned} \label{eq:intermRes1}
	\end{equation}
	Using the mean-value theorem, it follows that
	\begin{equation}
	\begin{aligned}
	\left|(\gamma^{l}_{i} , s^{k}_{i}) - (\sigma^{l}_{i}, y^{k}_{i}) \right| &= \left| \sigma^{l^\top}_{i} \left( \frac{\partial F_i}{\partial w}(w^{l}_{i} + s^{l}_{i}) - \int_{0}^{1}{\frac{\partial F_i}{\partial w}(w^{k}_{i} + t\,s^{k}_{i}) \dd t} \right) s^{k}_{i} \right| \\
	& \leq c_3\, \eta^{l+1,k}_{i} \left\Vert \sigma^{l}_{i} \right\Vert  \left\Vert s^{k}_{i} \right\Vert,
	\end{aligned} \label{eq:meanValueRes}
	\end{equation}
	based on the Lipschitz continuity in $(AS2)$.
	From the skipping condition in~\eqref{skip1}, we know that $\left| (\tau^{l}_{i},s^{l}_{i}) \right| \geq c_1 \left\Vert \sigma_i^{l} \right\Vert \, \left\Vert \rho^{l}_{i} \right\Vert$. In addition, given that $\eta^{l,k}_{i} \leq \eta^{l+1,k}_{i}$, we obtain
	\begin{equation}
	\begin{aligned}
	\left\Vert y^{k}_{i} - A^{l+1}_{i}s^{k}_{i} \right\Vert &\leq \frac{c_3}{c_1}\left(\frac{2}{c_1^2} + 1 \right)^{l-k} \eta^{l,k}_{i} \left\Vert s^{k}_{i} \right\Vert  \\
	&+ \left( c_3 \eta^{l+1,k}_{i} +  \frac{c_3}{c_1}\left(\frac{2}{c_1^2} + 1 \right)^{l-k} \eta^{l,k}_{i} \right) \frac{\left\Vert \sigma^{l}_{i}\right\Vert \left\Vert s^{k}_{i}\right\Vert }{\left|(\tau^{l}_{i},s^{l}_{i})\right|} \left\Vert \rho^{l}_{i}\right\Vert  \\
	& \leq \frac{c_3}{c_1}\left(\frac{2}{c_1^2} + 1 \right)^{l+1-k} \eta^{l+1,k}_{i} \left\Vert s^{k}_{i} \right\Vert. 
	\end{aligned}
	\end{equation}\\
	\\
	\emph{Step~2: Eq.~\eqref{eq:bound_f} based on adjoint Jacobian update}\\
	Let us continue this proof by induction for Eq.~\eqref{eq:bound_f}, based on the adjoint Jacobian update formula. First, we derive the following error bound for the adjoint Jacobian update in case $l = k+1$ in Eq.~\eqref{eq:bound_f}
		\begin{equation}
		\begin{aligned}
		\left\Vert y^{k}_{i} - A^{k+1}_{i}s^{k}_{i} \right\Vert &= \left\Vert y^{k}_{i} - A^{k}_{i} s^{k}_{i} - \frac{1}{(\sigma_i^k, \rho^{k}_{i})}\, \rho^{k}_{i} \tau^{k^\top}_{i} s^{k}_{i} \right\Vert = \left\Vert \rho^{k}_{i} - \frac{1}{(\sigma_i^k, \rho^{k}_{i})}\, \rho^{k}_{i} \tau^{k^\top}_{i} s^{k}_{i} \right\Vert \\
		&= \left| 1 - \frac{(\tau^{k}_{i}, s^{k}_{i})}{(\sigma_i^k, \rho^{k}_{i})} \right| \left\Vert \rho^{k}_{i} \right\Vert = \left| \frac{(\sigma_i^k, y^{k}_{i} - A^{k}_{i} s^{k}_{i}) - (\gamma^{k}_{i} - A^{k^\top}_{i} \sigma^{k}_{i}, s^{k}_{i})}{(\sigma_i^k, \rho^{k}_{i})} \right| \left\Vert \rho^{k}_{i} \right\Vert \\
		&= \frac{\left|(\sigma_i^k, y^{k}_{i}) - (\gamma^{k}_{i}, s^{k}_{i})\right|}{\left|(\sigma_i^k, \rho^{k}_{i})\right|} \left\Vert \rho^{k}_{i} \right\Vert.
		\end{aligned}
		\end{equation}
		From the skipping conditions in~\eqref{skip1}-\eqref{skip2}, we know that $\left| (\sigma^{k}_{i},\rho^{k}_{i}) \right| \geq c_1 \left\Vert s^{k}_{i} \right\Vert \, \left\Vert \tau^{k}_{i} \right\Vert$ and $\left\Vert \rho^{k}_{i} \right\Vert \le \frac{\left| (\tau^{k}_{i}, s^{k}_{i}) \right|}{c_1 \left\Vert \sigma_i^{k} \right\Vert} \le \frac{\left\Vert \tau^{k}_{i}\right\Vert \left\Vert s^{k}_{i} \right\Vert}{c_1 \left\Vert \sigma_i^{k} \right\Vert}$ holds. We can use these lower and upper bounds to rewrite the latter expression as
		\begin{equation}
		\begin{aligned}
		\left\Vert y^{k}_{i} - A^{k+1}_{i}s^{k}_{i} \right\Vert &= \frac{\left|(\sigma_i^k, y^{k}_{i}) - (\gamma^{k}_{i}, s^{k}_{i})\right|}{\left|(\sigma_i^k, \rho^{k}_{i})\right|} \left\Vert \rho^{k}_{i} \right\Vert \le \frac{\left|(\sigma_i^k, y^{k}_{i}) - (\gamma^{k}_{i}, s^{k}_{i})\right|}{c_1 \left\Vert s^{k}_{i} \right\Vert \, \left\Vert \tau^{k}_{i} \right\Vert} \left\Vert \rho^{k}_{i} \right\Vert \\
		&\le \frac{\left|(\sigma_i^k, y^{k}_{i}) - (\gamma^{k}_{i}, s^{k}_{i})\right|}{c_1^2 \left\Vert \sigma_i^{k} \right\Vert} \\
		&\le \frac{c_3}{c_1^2}\,  \left\Vert s^{k}_{i} \right\Vert^2,
		\end{aligned}
		\end{equation}
		where we additionally used the result
		\begin{equation}
		\begin{aligned}
		\left|(\gamma^{k}_{i}, s^{k}_{i}) - (\sigma_i^k, y^{k}_{i})\right| &=  \left| \sigma^{k^\top}_{i}\left( \frac{\partial F_i}{\partial w}(w^{k+1}_{i}) - \int_{0}^{1}{\frac{\partial F_i}{\partial w}(w^{k}_{i} + t\,s^{k}_{i})\dd t} \right) s^{k}_{i} \right|  \\
		&\leq c_3\, \left\Vert \sigma^{k}_{i} \right\Vert  \left\Vert s^{k}_{i} \right\Vert^2.
		\end{aligned}
		\end{equation}
		Note that $\eta^{k+1,k}_{i} = \left\Vert s^{k}_{i} \right\Vert$ such that Eq.~\eqref{eq:bound_f} holds in case $l = k+1$.
	Assume that the result in~\eqref{eq:bound_f} holds for all $j = k+1, \ldots, l$. Then, we have the following
		\begin{equation}
		\begin{aligned}
		\Vert y^{k}_{i} - A^{l+1}_{i}s^{k}_{i} \Vert &= \left \Vert y^{k}_{i} - A^{l}_{i} s^{k}_{i} - \alpha_{\mathrm{A},i}^l\, \rho^{l}_{i} \tau^{l^\top}_{i} s^{k}_{i} \right\Vert \\
		&\leq \Vert  y^{k}_{i} - A^{l}_{i}s^{k}_{i} \Vert + \left| \frac{(\tau^{l}_{i},s^{k}_{i})}{(\sigma_i^l, \rho^{l}_{i})} \right| \text{ } \Vert \rho^{l}_{i} \Vert,
		\end{aligned}
		\end{equation}
		for the adjoint Jacobian update formula in which $\alpha_{\mathrm{A},i} = \frac{1}{(\sigma_i^l, \rho^{l}_{i})}$. 
		From the skipping conditions in~\eqref{skip1}-\eqref{skip2}, we know that $\left| (\sigma^{l}_{i},\rho^{l}_{i}) \right| \geq c_1 \left\Vert s^{l}_{i} \right\Vert \, \left\Vert \tau^{l}_{i} \right\Vert$ and $\left\Vert \rho^{l}_{i} \right\Vert \le \frac{\left| (\tau^{l}_{i}, s^{l}_{i}) \right|}{c_1 \left\Vert \sigma_i^{l} \right\Vert} \le \frac{\left\Vert \tau^{l}_{i}\right\Vert \left\Vert s^{l}_{i} \right\Vert}{c_1 \left\Vert \sigma_i^{l} \right\Vert}$ holds such that $\frac{\left\Vert \sigma^{l}_{i}\right\Vert \left\Vert \rho^{l}_{i}\right\Vert}{\left| (\sigma^{l}_{i},\rho^{l}_{i}) \right|} \le \frac{1}{c_1}\frac{\left\Vert \sigma^{l}_{i}\right\Vert \left\Vert \rho^{l}_{i}\right\Vert}{\left\Vert s^{l}_{i} \right\Vert \, \left\Vert \tau^{l}_{i} \right\Vert} \le \frac{1}{c_1^2}$. In addition, given eqs.~\eqref{eq:intermRes1} and~\eqref{eq:meanValueRes} and given that $\eta^{l,k}_{i} \leq \eta^{l+1,k}_{i}$, we obtain
		\begin{equation}
		\begin{aligned}
		\left\Vert y^{k}_{i} - A^{l+1}_{i}s^{k}_{i} \right\Vert &\leq \frac{c_3}{c_1}\left(\frac{2}{c_1^2} + 1 \right)^{l-k} \eta^{l,k}_{i} \left\Vert s^{k}_{i} \right\Vert  \\
		&+ \left( c_3 \eta^{l+1,k}_{i} +  \frac{c_3}{c_1}\left(\frac{2}{c_1^2} + 1 \right)^{l-k} \eta^{l,k}_{i} \right) \frac{\left\Vert \sigma^{l}_{i}\right\Vert \left\Vert s^{k}_{i}\right\Vert }{\left| (\sigma^{l}_{i},\rho^{l}_{i}) \right|} \left\Vert \rho^{l}_{i}\right\Vert  \\
		&\leq \frac{c_3}{c_1} \left(\frac{1}{c_1} + \left(\frac{1}{c_1^2} + 1 \right) \left(\frac{2}{c_1^2} + 1 \right)^{l-k}\right) \eta^{l+1,k}_{i} \left\Vert s^{k}_{i} \right\Vert  \\
		&\leq \frac{c_3}{c_1} \left(\frac{2}{c_1^2} + 1 \right)^{l+1-k} \eta^{l+1,k}_{i} \left\Vert s^{k}_{i} \right\Vert.
		\end{aligned}
		\end{equation}

	Note that the induction proof of step~1 and~2 implies that Eq.~\eqref{eq:bound_f} additionally holds when switching between the forward and adjoint Jacobian update formulas. A similar induction-based proof can be used to show the result of Eq.~\eqref{eq:bound_a} for the dynamic block-TR1 Jacobian updates.
\end{proof}

Now, we present the resulting theorem on the convergence of the Jacobian approximation for the block-wise TR1 scheme under any sequence of decision rules that select the adjoint or forward updates at every iteration $k$ and for each block $i = 0, \ldots, N-1$.
\begin{theorem}
Given $(AS1)$-$(AS6)$ in Assumption~\ref{as:conv}, then the following holds for each Jacobian block matrix $i = 0, \ldots, N-1$
\begin{equation}
\lim_{k \rightarrow \infty} \left\Vert \left(A^{k}_{i} - \frac{\partial F_i}{\partial w_i}(w^{*}_{i})\right) N_{i} \right\Vert = 0, \label{eq:jac_conv}
\end{equation}
such that the following holds for the complete Jacobian approximation
\begin{equation}
\lim_{k \rightarrow \infty} \left\Vert \left(A^{k} - \frac{\partial F}{\partial w}(w^{*})\right) N \right\Vert = 0. \label{eq:jac_conv2}
\end{equation}

\label{thm:convergence}
\end{theorem}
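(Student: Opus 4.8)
The plan is to prove the block-wise limit \eqref{eq:jac_conv} first and then assemble the finitely many blocks to obtain \eqref{eq:jac_conv2}. Write $J_i^\star := \frac{\partial F_i}{\partial w}(w_i^\star)$ for brevity. The starting observation is that, by the active-set stability in $(AS5)$ (itself guaranteed by Theorem~\ref{AS-conv}), for all $k \ge k_0$ the QP step satisfies $P_A\,\Delta w^k = 0$, so that $s^k$ lies in the range of the null-space matrix $N$ and, block-wise, $s_i^k = N_i\,s_{N,i}^k$ exactly as postulated in $(AS6)$. This confines the primal steps to precisely the subspace on which convergence must be established, and reduces the problem to controlling the action of $A_i^k - J_i^\star$ on the directions $s_i^k$.

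First I would convert the secant-type estimate of Lemma~\ref{lem:first} into a bound on the action of $A_i^l$ relative to the \emph{exact} Jacobian. Using the mean-value form $y_i^k = \bigl(\int_0^1 \frac{\partial F_i}{\partial w}(w_i^k + t\,s_i^k)\,\dd t\bigr) s_i^k$ together with the Lipschitz property $(AS2)$, one obtains $\Vert y_i^k - J_i^\star s_i^k\Vert \le c_3\,\epsilon_i^k\,\Vert s_i^k\Vert$, where $\epsilon_i^k := \max\{\Vert w_i^j - w_i^\star\Vert : j \ge k\} \to 0$ by the convergence assumption $(AS4)$. Combining this with the forward bound \eqref{eq:bound_f} via the triangle inequality gives, for all $l \ge k+1$,
\begin{equation}
\left\Vert \left(A_i^l - J_i^\star\right) s_i^k \right\Vert \;\le\; \left( \frac{c_3}{c_1}\Bigl(\tfrac{2}{c_1^2}+1\Bigr)^{l-k}\eta_i^{l,k} + c_3\,\epsilon_i^k \right)\Vert s_i^k\Vert . \nonumber
\end{equation}

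The decisive step is to close the argument with the uniform linear independence in $(AS6)$. Fix $k_i \ge k_0$, let $k_i^1 < \dots < k_i^{q_i}$ denote the associated window indices, and choose a common evaluation index $m$ strictly larger than all of them (so that the bound applies with $k = k_i^j$ and $l = m$ for every $j$). Because the window length is capped in $(AS6)$, the exponent $m - k_i^j$ stays uniformly bounded, so the factor $(\tfrac{2}{c_1^2}+1)^{m-k_i^j}$ is dominated by a constant while both $\eta_i^{m,k_i^j}$ and $\epsilon_i^{k_i^j}$ tend to zero as $k_i \to \infty$; this produces a single scalar $\delta_{k_i} \to 0$ with $\Vert (A_i^m - J_i^\star)\,N_i\,\hat{s}_{N,i}^{\,k_i^j}\Vert \le \delta_{k_i}$ for each normalized column $\hat{s}_{N,i}^{\,k_i^j}$ of $S_{N_i}^{k_i}$, where I use $\Vert s_i^k\Vert = \Vert N_i s_{N,i}^k\Vert \le \Vert N_i\Vert\,\Vert s_{N,i}^k\Vert \le \Vert s_{N,i}^k\Vert$ since $\Vert N_i\Vert \le \Vert N\Vert = 1$. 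Stacking the $q_i$ inequalities yields $\Vert (A_i^m - J_i^\star)N_i\,S_{N_i}^{k_i}\Vert \le \sqrt{q_i}\,\delta_{k_i}$, and since $\sigma_{\min}(S_{N_i}^{k_i}) \ge c_4$ the square matrix $S_{N_i}^{k_i}$ is invertible with $\Vert (S_{N_i}^{k_i})^{-1}\Vert \le 1/c_4$, whence $\Vert (A_i^m - J_i^\star)N_i\Vert \le (\sqrt{q_i}/c_4)\,\delta_{k_i}$. Letting $k_i \to \infty$ (equivalently $m \to \infty$) proves \eqref{eq:jac_conv}. Finally, since $A^k$, $\frac{\partial F}{\partial w}$ and $N$ are all block structured and the only inexact entries of $A^k$ are the diagonal blocks $A_i^k$, the full-matrix norm $\Vert (A^k - \frac{\partial F}{\partial w}(w^\star))N\Vert$ is controlled by the finitely many block quantities $\Vert (A_i^k - J_i^\star)N_i\Vert$, so \eqref{eq:jac_conv} immediately gives \eqref{eq:jac_conv2}.

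I expect the main obstacle to be taming the exponential factor $(\tfrac{2}{c_1^2}+1)^{l-k}$ inherited from Lemma~\ref{lem:first}: it is harmless \emph{only} because $(AS6)$ bounds the window length, so $m - k_i^j$ stays uniformly bounded and the blow-up is absorbed into a constant while $\eta_i^{l,k}$ and $\epsilon_i^k$ drive the bound to zero. The supporting bookkeeping — selecting the common evaluation index $m$ above all window indices so that \eqref{eq:bound_f} is applicable to each $s_i^{k_i^j}$, and relating $\Vert s_i^k\Vert$ to $\Vert s_{N,i}^k\Vert$ through $N^\top N = \eye$ — is routine but must be handled carefully to keep the singular-value inversion step valid.
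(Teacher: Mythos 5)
Your proof is correct and takes essentially the same route as the paper's: a mean-value/Lipschitz bound comparing $y_i^j$ with the exact Jacobian action, the estimate of Lemma~\ref{lem:first} evaluated at a common index just past the $(AS6)$ window (so the exponential factor $\bigl(\tfrac{2}{c_1^2}+1\bigr)^{l+1}$ is absorbed into a constant while $\eta$, $\nu$ tend to zero by $(AS4)$), followed by stacking the $q_i$ normalized directions and inverting $S_{N_i}^{k_i}$ via $\sigma_{\min}\ge c_4$ to get $\bigl\Vert\bigl(A_i^{k}-\tfrac{\partial F_i}{\partial w_i}(w_i^*)\bigr)N_i\bigr\Vert\le c\,\nu_i^{k-l-1}\to 0$, with block separability giving the full-matrix limit. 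Your explicit side remarks --- that active-set stability forces $P_A\,\Delta w^k=0$ so the steps lie in the range of $N$, and that $\Vert s_i^k\Vert\le\Vert s_{N,i}^k\Vert$ because $\Vert N_i\Vert\le\Vert N\Vert=1$ --- are small justifications the paper leaves implicit, not a different argument.
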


\begin{proof}
Based on the inequality $\Vert w^{p}_{i} - w^{s}_{i}\Vert \leq \Vert  w^{p}_{i} - w^{*}_{i}\Vert + \Vert  w^{s}_{i} - w^{*}_{i}\Vert$ and using the definition $\eta^{l,k}_{i} = \max\{ \Vert  w^{p}_{i} - w^{s}_{i}\Vert  \text{  }  | \text{  } k \leq s \leq p \leq l\}$, one obtains
\begin{equation}
\eta^{k+l+1,k}_{i} \leq 2\, \nu^{k}_{i} \;\text{ for }\; \nu^{k}_{i} = \max\{ \Vert w^{s}_{i} - w^{*}_{i}\Vert  \text{  }  | \text{  } k \leq s \leq k+l+1\},
\end{equation}
for $l \geq q_i$ and $q_i$ is defined as in Assumption~\ref{as:conv}.
In addition, the following holds
\begin{subequations}
\begin{alignat}{3}
\left\Vert  y^{j}_{i} - \frac{\partial F_i}{\partial w_i}(w^{*}_{i}) s^{j}_{i}\right\Vert &=  \left\Vert  \Bigg(\int_{0}^{1}{\frac{\partial F_i}{\partial w_i}(w^{j}_i + ts^{j}_i) \dd t }\Bigg) s^{j}_i - \frac{\partial F_i}{\partial w_i}(w^{*}_{i}) s^{j}_{i}\right\Vert \\
&= \left\Vert \Bigg(\int_{0}^{1}{\frac{\partial F_i}{\partial w_i}(w^{j}_i + ts^{j}_i) \dd t - \frac{\partial F_i}{\partial w_i}(w^{*}_{i}})\Bigg) s^{j}_i \right\Vert \\
&\leq c_3 \nu^{k}_{i} \left\Vert s^{j}_i\right\Vert, 
\end{alignat}
\end{subequations}
at an iteration $j$, where $k \leq j \leq k + l$, regardless of whether the forward or adjoint Jacobian update formula has been used. Moreover, from Lemma~\ref{lem:first}, we have that
\begin{equation}
\begin{aligned}
\left\Vert  y^{j}_{i} - A^{k+l+1}_{i}s^{j}_{i} \right\Vert &\leq \frac{c_3}{c_1}\left(\frac{2}{c_1^2} + 1 \right)^{k+l+1-j} \eta^{k+l+1,j}_{i} \Vert s^{j}_{i}\Vert, \quad k \leq j \leq k + l, \\
&\leq 2\, \frac{c_3}{c_1}\left(\frac{2}{c_1^2} + 1 \right)^{l+1} \nu^{k}_{i} \Vert s^{j}_{i}\Vert.
\end{aligned}
\end{equation}
We use the triangle inequality to obtain
\begin{subequations}
\begin{alignat}{2}
\left\Vert \left( A^{k+l+1}_{i} -  \frac{\partial F_i}{\partial w_i}(w^{*}_{i}) \right) \frac{s^{j}_{i}}{\left\Vert s^{j}_{i} \right\Vert} \right\Vert  &\leq \frac{1}{\left\Vert s^{j}_{i} \right\Vert} \left(\left\Vert  y^{j}_{i} - A^{k+l+1}_{i}s^{j}_{i} \right\Vert + \left\Vert  y^{j}_{i} - \frac{\partial F_i}{\partial w_i}(w^{*}_{i}) s^{j}_{i}\right\Vert \right)\\
&\leq \Bigg( 2\, \frac{c_3}{c_1}\left(\frac{2}{c_1^2} + 1 \right)^{l+1} + c_3 \Bigg) \nu^{k}_{i},
\end{alignat}
\end{subequations}
which holds for a sequence of indices $j = k_i^1, \ldots, k_i^{q_i}$. Then, we use the linear independence condition~$(AS6)$ in Assumption~\ref{as:conv} that guarantees both existence of the inverse $(S^{k_i}_{N_i})^{-1}$ and the upper bound $\Vert (S^{k_i}_{N_i})^{-1}\Vert \leq 1 / c_4$, such that
\begin{subequations}
\begin{alignat}{2}
\left\Vert \left( A^{k+l+1}_{i} - \frac{\partial F_i}{\partial w_i}(w^{*}_{i}) \right) N_i \right\Vert &\leq \frac{1}{c_4} \left\Vert \left( A^{k+l+1}_{i} - \frac{\partial F_i}{\partial w_i}(w^{*}_{i}) \right) N_i S^{k_i}_{N_i} \right\Vert \\
&\leq c_5\, \nu^{k}_{i},
\end{alignat}
\end{subequations}
where $c_5 = \frac{c_3}{c_4} \Bigg( \frac{2}{c_1}\left(\frac{2}{c_1^2} + 1 \right)^{l+1} + 1 \Bigg) \sqrt{q_i}$ has been defined. Lastly, the result in Eq.~\eqref{eq:jac_conv} follows from the fact that assumption~$(AS4)$ implies that $\nu^{k}_{i}$ tends to zero.
Note that this asymptotic result holds regardless of which Jacobian update (adjoint or forward TR1 formula) is performed for each block $i = 0, \ldots, N-1$. The same convergence result then holds for the complete Jacobian matrix in~\eqref{eq:jac_conv2}, based on separability of the active inequality constraints and of the nonlinear constraint functions.

\end{proof}



\subsection{Local rate of linear convergence for Gauss-Newton based SQP}

One iteration of the adjoint-based Gauss-Newton SQP method solves the linear system in Eq.~\eqref{QP-KKT}, which can be written in the following compact form
\begin{equation}
\Jin(z^k) \Delta z = -\F(z^k), \label{eq:compactNewton}
\end{equation}
where $\F(\cdot)$ denotes the KKT optimality conditions in the right-hand side of Eq.~\eqref{QP-KKT}.
Let us define regularity for a local minimizer $z^\star := (w^\star, \lambda^\star, \mu^\star)$ of the NLP, given a particular set of active inequality constraints. For this purpose, we rely on the linear independence constraint qualification~(LICQ) and the second order sufficient conditions~(SOSC) for optimality, of which the latter requires that the Hessian of the Lagrangian is strictly positive definite in the directions of the critical cone~\cite{nocedal2006numerical}.
\begin{definition}
	\label{as:regular}
	A minimizer of an equality constrained NLP is called a regular KKT point, if both LICQ and SOSC are satisfied at this KKT point.
\end{definition}
The convergence of this Newton-type optimization method then follows the classical and well-known local contraction theorem from~\cite{Bock1987,Deuflhard2011,diehl2010adjoint,Potschka2011}.
We use a particular version of this theorem from~\cite{Diehl2016,Quirynen2018}, providing sufficient and necessary conditions for the existence of a neighbourhood of the solution where the Newton-type iteration converges locally. Let $\rho(P)$ denote the spectral radius, i.e., the maximum absolute value of the eigenvalues for the square matrix $P$.

\begin{theorem}[Local Newton-type contraction~\cite{Diehl2016}] 
	We consider the twice continuously differentiable function $\F(z)$ from Eq.~\eqref{QP-KKT} and the regular KKT point $\F(z^\star)=0$ from Definition~\ref{as:regular}. We then apply the Newton-type iteration in Eq.~\eqref{eq:compactNewton}, where $\Jin(z) \approx J(z)$ is additionally assumed to be continuously differentiable and invertible in a neighbourhood of the solution. 
	If all eigenvalues of the iteration matrix have a modulus smaller than one, i.e., if the spectral radius satisfies
	\begin{equation}
	\kappa^\star := \rho\left(\Jin(z^\star)^{-1} J(z^\star) - \mathbb{1}\right) < 1, \label{eq:kappa_star}
	\end{equation}
	then this fixed point $z^\star$ is asymptotically stable.
	Additionally, the iterates $z^k$ converge linearly to the KKT point $z^\star$ with the asymptotic contraction rate $\kappa^\star$ when initialized sufficiently close. On the other hand, the fixed point $z^\star$ is unstable if $\kappa^\star > 1$.
	\label{local_contraction}
\end{theorem}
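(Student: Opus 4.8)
The plan is to recast the full-step Newton-type iteration as a fixed-point iteration and then to reduce the statement to the classical linear stability theory for such iterations. On the neighbourhood where $\Jin$ is invertible, I define the iteration map
\[
\phi(z) := z - \Jin(z)^{-1}\F(z),
\]
so that the iteration~\eqref{eq:compactNewton} reads $z^{k+1} = \phi(z^k)$. Since $\F(z^\star)=0$ at the regular KKT point, we have $\phi(z^\star)=z^\star$, i.e.\ $z^\star$ is a fixed point of $\phi$. Because $\F$ is twice continuously differentiable and $\Jin$ is continuously differentiable and invertible near $z^\star$, the map $\phi$ is continuously differentiable there, so its local behaviour is governed entirely by the spectrum of its Jacobian at $z^\star$.

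The key computation is the linearization of $\phi$ at $z^\star$. Differentiating with the product rule gives
\[
\frac{\partial \phi}{\partial z}(z) = \mathbb{1} - \left(\frac{\partial}{\partial z}\Jin(z)^{-1}\right)\F(z) - \Jin(z)^{-1}\frac{\partial \F}{\partial z}(z).
\]
Evaluating at $z=z^\star$, the term carrying the derivative of $\Jin^{-1}$ is annihilated by the factor $\F(z^\star)=0$; this cancellation is precisely what makes the asymptotic rate of a Newton-type method insensitive to the derivative of the (possibly crude) Jacobian approximation. With $\frac{\partial \F}{\partial z}(z^\star)=J(z^\star)$ we obtain $\frac{\partial \phi}{\partial z}(z^\star) = \mathbb{1} - \Jin(z^\star)^{-1}J(z^\star)$. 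Since the spectral radius is invariant under negation of a matrix, $\rho\!\left(\frac{\partial \phi}{\partial z}(z^\star)\right) = \rho\!\left(\Jin(z^\star)^{-1}J(z^\star)-\mathbb{1}\right) = \kappa^\star$.

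It then remains to invoke the standard local stability theorem for $C^1$ fixed-point maps (Ostrowski's theorem): if the spectral radius of the derivative at a fixed point is strictly below one, the fixed point is asymptotically stable and the iteration converges linearly with asymptotic contraction rate equal to that spectral radius, whereas if it exceeds one the fixed point is unstable. Applying this with $\rho\!\left(\frac{\partial \phi}{\partial z}(z^\star)\right)=\kappa^\star$ yields all three conclusions at once: asymptotic stability and linear convergence with rate $\kappa^\star$ when $\kappa^\star<1$, and instability when $\kappa^\star>1$.

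The only delicate point is upgrading a norm bound on the derivative to the \emph{exact} asymptotic rate $\kappa^\star$ rather than a mere upper bound. This is handled in the usual way: for every $\varepsilon>0$ one chooses a vector norm whose induced operator norm of $\frac{\partial \phi}{\partial z}(z^\star)$ lies within $\varepsilon$ of its spectral radius, which is possible because the spectral radius equals the infimum of induced operator norms over all equivalent norms. Continuity of $\frac{\partial \phi}{\partial z}$ together with the mean-value inequality then produces a neighbourhood on which $\Vert z^{k+1}-z^\star\Vert \le (\kappa^\star+\varepsilon)\Vert z^k-z^\star\Vert$, and since $\varepsilon$ is arbitrary this gives the claimed asymptotic contraction rate; the instability case follows symmetrically by bounding the derivative from below. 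A full treatment matching this problem formulation is given in~\cite{Diehl2016,diehl2010adjoint}.
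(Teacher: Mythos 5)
Your proof is correct and follows exactly the route the paper intends: the paper itself gives no proof but defers to~\cite{Diehl2016,Quirynen2017} ``based on nonlinear systems theory,'' which is precisely your argument of recasting the iteration as the fixed-point map $\phi(z) = z - \Jin(z)^{-1}\F(z)$, noting that $\F(z^\star)=0$ kills the derivative-of-$\Jin^{-1}$ term so that $\frac{\partial \phi}{\partial z}(z^\star) = \mathbb{1} - \Jin(z^\star)^{-1}J(z^\star)$, and invoking Ostrowski's linearization theorem together with the $\varepsilon$-adapted norm to get the exact asymptotic rate $\kappa^\star$. The only cosmetic remark is that the instability case for $\kappa^\star>1$ does not follow by ``bounding the derivative from below'' in norm but from the unstable-eigenspace part of the same linearization theorem, which you implicitly invoke anyway.
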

A proof for Theorem~\ref{local_contraction} can be found in~\cite{Diehl2016,Quirynen2017}, based on nonlinear systems theory. Using this result, let us define the linear contraction rate for a Gauss-Newton method with exact Jacobian information
\begin{equation}
\kappa_{\mathrm{GN}}^\star := \rho\left(\begin{bmatrix}
H & (\frac{\partial F}{\partial w}-E)^\top & P_A^\top \\
\frac{\partial F}{\partial w}-E & \text{ } & \text{ } \\
P_A & \text{} & \text{} 
\end{bmatrix}^{-1} \begin{bmatrix}
\nabla_{w}^2 \LL & (\frac{\partial F}{\partial w}-E)^\top & P_A^\top \\
\frac{\partial F}{\partial w}-E & \text{ } & \text{ } \\
P_A & \text{} & \text{} 
\end{bmatrix} - \mathbb{1}\right) < 1, \label{eq:exactGN_rate}
\end{equation}
at the local solution point $z^\star := (w^\star, \lambda^\star, \mu^\star)$ of the KKT conditions. In what follows, we show that the local contraction rate for the block-TR1 Gauss-Newton SQP method 
\begin{equation}
\kappa_{\mathrm{BTR1}}^\star := \rho\left(\begin{bmatrix}
H & (A-E)^\top & P_A^\top \\
A-E & \text{ } & \text{ } \\
P_A & \text{} & \text{} 
\end{bmatrix}^{-1} \begin{bmatrix}
\nabla_{w}^2 \LL & (\frac{\partial F}{\partial w}-E)^\top & P_A^\top \\
\frac{\partial F}{\partial w}-E & \text{ } & \text{ } \\
P_A & \text{} & \text{} 
\end{bmatrix} - \mathbb{1}\right) < 1, \label{eq:TR1GN_rate}
\end{equation}
coincides with the exact Jacobian based linear convergence rate in~\eqref{eq:exactGN_rate}. The following result states that the eigenspectrum of the iteration matrix $\Jin(z^\star)^{-1} J(z^\star) - \mathbb{1}$ at the solution point $z^\star := (w^\star, \lambda^\star, \mu^\star)$ coincides with the eigenspectrum of the iteration matrix $\Jgn(z^\star)^{-1} J(z^\star) - \mathbb{1}$, using the notation $\sigma(P)$ to denote the spectrum, i.e., the set of eigenvalues for a matrix $P$.

\begin{lemma}
	For a regular KKT point $z^\star := (w^\star, \lambda^\star, \mu^\star)$, the eigenvalues of the block-TR1 based iteration matrix $\Jin(z^\star)^{-1} J(z^\star) - \mathbb{1}$ satisfy
	\begin{equation}
	\sigma\left( \Jin(z^\star)^{-1} J(z^\star) - \mathbb{1} \right) = \sigma\left( \Jgn(z^\star)^{-1} J(z^\star) - \mathbb{1} \right).\label{eq:spectrum}
	\end{equation}
\label{lem:spectrum}
\end{lemma}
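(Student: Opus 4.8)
The plan is to prove the stronger claim that the matrix pencils $(J(z^\star),\Jin(z^\star))$ and $(J(z^\star),\Jgn(z^\star))$ share the same generalized eigenvalues; then~\eqref{eq:spectrum} follows, since subtracting $\mathbb{1}$ merely shifts every eigenvalue by $-1$. Because $z^\star$ is a regular KKT point, both $\Jin(z^\star)$ and $\Jgn(z^\star)$ are invertible, so the eigenvalues of $\Jin^{-1}J$ and of $\Jgn^{-1}J$ are exactly the roots in $\theta$ of $\det(J-\theta\Jin)$ and $\det(J-\theta\Jgn)$, respectively. It therefore suffices to establish the polynomial identity $\det\!\big(J(z^\star)-\theta\,\Jin(z^\star)\big)=\det\!\big(J(z^\star)-\theta\,\Jgn(z^\star)\big)$ in $\theta$, which yields equality of characteristic polynomials and hence of the spectra with multiplicities.

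The only place the block-TR1 structure enters is through Theorem~\ref{thm:convergence}. Writing $G:=\tfrac{\partial F}{\partial w}(w^\star)$, the convergence result gives $(G-A)N=0$, i.e.\ $\ker P_A=\operatorname{range}(N)\subseteq\ker(G-A)$; since LICQ makes $P_A$ of full row rank, every row of $G-A$ lies in $(\ker P_A)^\perp=\operatorname{range}(P_A^\top)$, so there is a matrix $M$ with $G-A=M P_A$. With this factorization, a short computation shows that $J-\theta\Jin$ and $J-\theta\Jgn$ coincide in the $(1,1)$ block $\nabla_w^2\LL-\theta H$, in the active-constraint blocks $(1-\theta)P_A$ and $(1-\theta)P_A^\top$, and in all the zero blocks, and that they differ only in the dynamics-Jacobian blocks: $J-\theta\Jin$ carries $(1-\theta)(G-E)+\theta\,M P_A$ in position $(2,1)$ and its transpose in $(1,2)$, whereas $J-\theta\Jgn$ carries $(1-\theta)(G-E)$ and its transpose. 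The surplus is precisely $\theta\,MP_A$, a multiple of the active-constraint rows already present in the third block row $(1-\theta)P_A$.

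The key step is then to remove this surplus by a determinant-preserving congruence. For $\theta\neq1$ I would take the unit block-upper-triangular matrix $L$ whose only nontrivial block is $L_{23}=-\tfrac{\theta}{1-\theta}M$, so that $\det L=1$. Left-multiplication by $L$ subtracts $\tfrac{\theta}{1-\theta}M$ times the third block row $(1-\theta)P_A$ from the second block row, turning the $(2,1)$ block into $(1-\theta)(G-E)$, and right-multiplication by $L^\top$ cleans the $(1,2)$ block symmetrically; because the $(2,2),(2,3),(3,2),(3,3)$ blocks vanish, no other block is affected. Hence $L\big(J-\theta\Jin\big)L^\top=J-\theta\Jgn$, giving $\det(J-\theta\Jin)=\det(J-\theta\Jgn)$ for every $\theta\neq1$. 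As both determinants are polynomials in $\theta$, agreement on all but one value forces the identity for all $\theta$, in particular at $\theta=1$. This proves the two pencils have identical generalized eigenvalues with multiplicities, so $\sigma(\Jin^{-1}J)=\sigma(\Jgn^{-1}J)$, and subtracting $\mathbb{1}$ yields~\eqref{eq:spectrum}.

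I expect the conceptual crux to be the factorization $G-A=M P_A$, which is exactly what Theorem~\ref{thm:convergence} together with LICQ delivers and which is what makes the inexact and exact Jacobians interchangeable inside the saddle-point structure. The remaining difficulty is mostly bookkeeping: confirming by explicit block multiplication that $L(J-\theta\Jin)L^\top$ reproduces $J-\theta\Jgn$ exactly while leaving the Hessian and active-constraint blocks untouched, and justifying the passage from ``equal for all $\theta\neq1$'' to the full polynomial identity.
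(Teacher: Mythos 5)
Your proof is correct, and it reaches the conclusion by a genuinely different mechanism than the paper, even though both arguments share the same two reductions: the eigenvalues of $\Jin(z^\star)^{-1}J(z^\star)-\mathbb{1}$ are the shifted roots of $\det\left(J(z^\star)-\theta\,\Jin(z^\star)\right)$, and the only structural input is the null-space agreement $\left(A-\frac{\partial F}{\partial w}(w^\star)\right)N=0$ from Theorem~\ref{thm:convergence}. The paper evaluates the determinant directly: it factors out $(-\eig)^{2n_A}$ from the active-constraint blocks, projects with the rectangular matrices $\mathrm{diag}(N,\eye)$ onto the null space of $P_A$, uses $AN=\frac{\partial F}{\partial w}(w^\star)N$ to extract a further factor $(-\eig)^{2n_F}$, and observes that the reduced determinant no longer involves $A$, so the same zeros arise for $\Jgn$. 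You instead convert the null-space condition into the row-space factorization $\frac{\partial F}{\partial w}(w^\star)-A=MP_A$ (valid because LICQ and $\mathrm{range}(N)=\ker P_A$ put every row of the discrepancy in $\mathrm{range}(P_A^\top)$) and exhibit an explicit unimodular congruence $L\left(J-\theta\,\Jin\right)L^\top=J-\theta\,\Jgn$ for $\theta\neq 1$, extending to all $\theta$ by polynomial identity; your block bookkeeping checks out, since the third block row $\left[(1-\theta)P_A,\,0,\,0\right]$ has zero trailing blocks, so the row and column operations touch only the $(2,1)$ and $(1,2)$ positions. Your route buys something the paper's argument does not state cleanly: equality of the full characteristic polynomials of the two pencils, hence equality of spectra \emph{with algebraic multiplicities}, while avoiding the somewhat informal step in the paper where the non-square projection and the multiplicity $2\,n_A$ of the zero eigenvalue must be ``verified''. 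The paper's computation, in exchange, makes the spectrum explicit — the eigenvalue $\eig=0$ with multiplicity $2\,n_A+2\,n_F$ plus the eigenvalues of the reduced null-space system — which is the form that feeds directly into the contraction-rate statement of Corollary~\ref{cor:rate}. One caveat you share with the paper: Theorem~\ref{thm:convergence} only gives $\left(A^k-\frac{\partial F}{\partial w}(w^\star)\right)N\rightarrow 0$, so applying it to a fixed limiting matrix $A$ presumes, as the paper does implicitly, that the update sequence $A^k$ converges (or that the argument is read along a convergent subsequence).
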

\begin{proof}
	Let us define the eigenvalues $\eig$ of the iteration matrix $\Jin(z^\star)^{-1} J(z^\star) - \mathbb{1}$ as the zeros of
	\begin{equation}
	\text{det}\left(\Jin(z^\star)^{-1} J(z^\star) - (\eig+1)\mathbb{1} \right) = 0,
	\end{equation}
	which, given that the Jacobian approximation $\Jin$ is invertible, this is equivalent to
	\begin{equation}
	\text{det}\left(J(z^\star) - (\eig+1)\Jin(z^\star) \right) = 0. \label{eq:det_interm}
	\end{equation}
	This block matrix then reads as
	\begin{equation}
	J(z^\star) - (\eig+1)\Jin(z^\star) = 
	\begin{bmatrix}
	\nabla_{w}^2 \LL - (\eig+1) H & \left( \frac{\partial F}{\partial w} - (\eig+1) A\right)^\top + \eig E^\top  & -\eig P_A^\top \\
	\left( \frac{\partial F}{\partial w} - (\eig+1) A\right) + \eig E & \text{ } & \text{ } \\
	-\eig P_A & \text{} & \text{} 
	\end{bmatrix}.
	\end{equation}
	The result follows from Theorem~\ref{thm:convergence} that claims the following asymptotic result for the block-TR1 based Jacobian approximation
	\begin{equation}
	\lim_{k \rightarrow \infty} \left(A^{k} - \frac{\partial F}{\partial w}(w^{*})\right) N = \left(A - \frac{\partial F}{\partial w}(w^{*})\right) N = 0,
	\end{equation}
	where $N$ is a null space matrix with appropriate dimensions and orthonormal column vectors such that $N^\top N = \eye$ and $P_A\,N = 0$. 
	We rewrite Eq.~\eqref{eq:det_interm} as follows
	\begin{equation}
	\begin{aligned}
	&\text{det}\left(J(z^\star) - (\eig+1)\Jin(z^\star) \right) = \\
	&(-\eig)^{2\,n_A} \; \text{det}\left( \begin{bmatrix}
	\nabla_{w}^2 \LL - (\eig+1) H & \left( \frac{\partial F}{\partial w} - (\eig+1) A\right)^\top + \eig E^\top  & P_A^\top \\
	\left( \frac{\partial F}{\partial w} - (\eig+1) A\right) + \eig E & \text{ } & \text{ } \\
	P_A & \text{} & \text{} 
	\end{bmatrix} \right).
	\end{aligned}
	\end{equation}
	It can be verified that $\text{det}\left(J(z^\star) - (\eig+1)\Jin(z^\star) \right)=0$ holds for $\eig=0$ with an algebraic multiplicity of $2 \, n_A$ as well as for the values of $\eig$ that satisfy
	\begin{equation}
	\begin{aligned}
	&\text{det}\left(\begin{bmatrix} N^\top & \zero\\ \zero&\eye \end{bmatrix}
	\begin{bmatrix}
	\nabla_{w}^2 \LL - (\eig+1) H & \left( \frac{\partial F}{\partial w} - (\eig+1) A\right)^\top + \eig E^\top  \\
	\left( \frac{\partial F}{\partial w} - (\eig+1) A\right) + \eig E & \zero
	\end{bmatrix}
	\begin{bmatrix} N& \zero \\ \zero &\eye \end{bmatrix}\right) \\
	&= \text{det}\left(
	\begin{bmatrix}
	N^\top \Delta H\, N & N^\top\left( \frac{\partial F}{\partial w} - (\eig+1) A\right)^\top + \eig N^\top E^\top  \\
	\left( \frac{\partial F}{\partial w} - (\eig+1) A\right) N + \eig E \,N & \zero
	\end{bmatrix}\right) \\
	&=(-\eig)^{2\,n_F} \; \text{det}\left(
	\begin{bmatrix}
	N^\top \Delta H\, N & N^\top \left( \frac{\partial F}{\partial w} - E \right)^\top  \\
	\left( \frac{\partial F}{\partial w} - E \right) N & \zero
	\end{bmatrix}\right) = 0,
	\end{aligned}
	\end{equation}
	in the limit for $k \rightarrow \infty$, where the compact notation $\Delta H := \left(\nabla_{w}^2 \LL - (\eig+1) H \right)$ has been used for the Gauss-Newton Hessian approximation. Therefore, the eigenvalues of the iteration matrix $\Jin(z^\star)^{-1} J(z^\star) - \mathbb{1}$ for the proposed block-TR1 approach, evaluated at a regular KKT point, are equal to the eigenvalues of the iteration matrix $\Jgn(z^\star)^{-1} J(z^\star) - \mathbb{1}$ for the exact Jacobian based Gauss-Newton method. 
\end{proof}

\begin{corollary}
	Based on Lemma~\ref{lem:spectrum}, the linear contraction rate for the block-TR1 based optimization algorithm coincides with the linear contraction rate of the exact Jacobian based Gauss-Newton method $\kappa_{\mathrm{BTR1}}^\star = \kappa_{\mathrm{GN}}^\star$, when the iterates are sufficiently close to the regular KKT point $z^\star := (w^\star, \lambda^\star, \mu^\star)$. \label{cor:rate}
\end{corollary}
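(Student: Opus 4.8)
The plan is to derive the corollary directly from Lemma~\ref{lem:spectrum} together with the contraction result of Theorem~\ref{local_contraction}, exploiting the elementary fact that the spectral radius of a matrix is a function of its spectrum alone. First I would recall the two definitions at stake: $\kappa_{\mathrm{GN}}^\star = \rho\!\left(\Jgn(z^\star)^{-1} J(z^\star) - \mathbb{1}\right)$ from Eq.~\eqref{eq:exactGN_rate} and $\kappa_{\mathrm{BTR1}}^\star = \rho\!\left(\Jin(z^\star)^{-1} J(z^\star) - \mathbb{1}\right)$ from Eq.~\eqref{eq:TR1GN_rate}, where $\Jin(z^\star)$ is the iteration matrix built with the limiting block-TR1 Jacobian approximation $A$, which by Theorem~\ref{thm:convergence} satisfies $\left(A - \frac{\partial F}{\partial w}(w^*)\right)N = 0$ on the null space of the active constraints.

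Next, since by definition $\rho(P) = \max\{|\eig| : \eig \in \sigma(P)\}$, the spectral radius is determined entirely by the set $\sigma(P)$. Lemma~\ref{lem:spectrum} establishes the set equality $\sigma\!\left(\Jin(z^\star)^{-1} J(z^\star) - \mathbb{1}\right) = \sigma\!\left(\Jgn(z^\star)^{-1} J(z^\star) - \mathbb{1}\right)$, so taking the maximum modulus over these identical sets yields $\kappa_{\mathrm{BTR1}}^\star = \kappa_{\mathrm{GN}}^\star$ at once. In particular, the standard regularity condition $\kappa_{\mathrm{GN}}^\star < 1$ for the exact Gauss-Newton method immediately forces $\kappa_{\mathrm{BTR1}}^\star < 1$, so the limiting block-TR1 iteration matrix also satisfies the contraction hypothesis~\eqref{eq:kappa_star} of Theorem~\ref{local_contraction}. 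Finally, I would invoke Theorem~\ref{local_contraction}: because $\Jin(z)$ is continuously differentiable and invertible in a neighbourhood of the regular KKT point $z^\star$ and its limiting iteration matrix has spectral radius $\kappa_{\mathrm{BTR1}}^\star < 1$, the block-TR1 SQP iterates converge q-linearly to $z^\star$ with asymptotic contraction rate $\kappa_{\mathrm{BTR1}}^\star$ when initialized sufficiently close, and the rate equality above identifies this with $\kappa_{\mathrm{GN}}^\star$, which is the claim.

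There is no genuine obstacle here, since the statement is a direct corollary; the one point that warrants care is that the eigenvalue coincidence of Lemma~\ref{lem:spectrum} is proven only \emph{at} the limit point $z^\star$, using the asymptotic identity $A N = \frac{\partial F}{\partial w}(w^*) N$. Consequently the conclusion must be phrased as an \emph{asymptotic} contraction rate, and one relies on the continuity of $\Jin$ to transfer the limiting spectral radius to the actual local convergence behaviour of the iterates $z^k$ — which is precisely what the ``sufficiently close'' qualifier in Theorem~\ref{local_contraction}, and in the statement of the corollary, is meant to supply.
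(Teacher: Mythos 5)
Your proposal is correct and takes essentially the same route as the paper: the corollary is a direct consequence of Lemma~\ref{lem:spectrum}, since the spectral radius is determined by the spectrum alone, so the set equality in Eq.~\eqref{eq:spectrum} immediately yields $\kappa_{\mathrm{BTR1}}^\star = \kappa_{\mathrm{GN}}^\star$, with Theorem~\ref{local_contraction} supplying the identification of this spectral radius with the asymptotic q-linear contraction rate near $z^\star$. Your closing caveat --- that the eigenvalue coincidence holds only in the limit at $z^\star$, which is why the conclusion is an asymptotic rate under the ``sufficiently close'' qualifier --- matches exactly how the paper qualifies the statement.
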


%

\subsection{Superlinear convergence for SQP with quasi-Newton Hessian updates}

Even though the majority of this article is focused on the generalized Gauss-Newton method for nonlinear least squares type optimization problems that occur frequently in predictive control applications, note that superlinear convergence results can be recovered when a block-structure preserving quasi-Newton method is additionally used to approximate the Hessian of the Lagrangian. For example, let us consider the following lemma that represents a block-structured or \emph{partitioned} version~\cite{Griewank1982,Griewank1982a} of the Broyden-Fletcher-Goldfarb-Shanno~(BFGS)~\cite{Asprion2014} or the symmetric rank-one~(SR1) formula~\cite{conn1991convergence,janka2016sr1} to approximate the block-diagonal Hessian matrix.

	\begin{theorem}
		Given $(AS1)$-$(AS6)$ in Assumption~\ref{as:conv}, then the following holds for each Hessian block matrix approximation
		\begin{equation}
		\lim_{k \rightarrow \infty} \left\Vert \left(H^{k}_{i} - \nabla_{w_i w_i}^{2}\mathcal{L}(w_i^\star, \lambda_i^\star)\right) N_{i} \right\Vert = 0,
		\end{equation}
		$i = 0, \ldots, N-1$, such that the following holds for the complete Hessian approximation
		\begin{equation}
		\lim_{k \rightarrow \infty} \left\Vert \left(H^{k} - \nabla_{w w}^{2}\mathcal{L}(w^\star, \lambda^\star)\right) N \right\Vert = 0.
		\end{equation}
		\label{thm:Hess_convergence}
\end{theorem}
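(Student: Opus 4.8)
The plan is to follow the argument of Theorem~\ref{thm:convergence} almost verbatim, with the block-structured (partitioned) quasi-Newton Hessian update playing the role of the block-wise TR1 Jacobian update. For the partitioned BFGS or SR1 formula, each block $H_i^{k+1}$ is required to satisfy the secant condition $H_i^{k+1} s_i^k = z_i^k$, where $s_i^k = w_i^{k+1}-w_i^k$ and $z_i^k := \nabla_{w_i}\mathcal{L}(w^{k+1},\lambda^{k+1}) - \nabla_{w_i}\mathcal{L}(w^{k},\lambda^{k+1})$ is the change in the block gradient of the Lagrangian. Two structural facts simplify the analysis relative to the TR1 case. First, in direct optimal control the Hessian $\nabla_{ww}^2\mathcal{L}$ is block diagonal (this is precisely what partitioned updating exploits), so $z_i^k = \big(\int_0^1 \nabla_{w_iw_i}^2\mathcal{L}(w^k + t s^k,\lambda^{k+1})\,\mathrm{d}t\big) s_i^k$ depends only on $s_i^k$, playing exactly the role that $y_i^k$ plays in Lemma~\ref{lem:first}. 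Second, the Hessian approximation is symmetric, so the secant condition is equivalently two-sided, $s_i^{k\top} H_i^{k+1} = z_i^{k\top}$; the separate forward and adjoint conditions of Eq.~\eqref{eq:conds} therefore collapse onto the single symmetric pair $(s_i^k, z_i^k)$, and the two skipping criteria~\eqref{skip1}-\eqref{skip2} merge into the single SR1-type condition $|s_i^{k\top}(z_i^k - H_i^k s_i^k)| \geq c_1\|s_i^k\|\,\|z_i^k - H_i^k s_i^k\|$ with $c_1\in(0,1)$.

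The central technical ingredient is the Hessian analog of Lemma~\ref{lem:first}: denoting by $c_3$ the Lipschitz constant of $\nabla_{ww}^2\mathcal{L}$, one shows the bounded-deterioration estimate
\[
\left\| z_i^k - H_i^l s_i^k \right\| \leq \frac{c_3}{c_1}\left(\frac{2}{c_1^2}+1\right)^{l-k}\eta_i^{l,k}\,\|s_i^k\|, \qquad \forall\, l \geq k+1,
\]
for each block $i=0,\ldots,N-1$. I would prove this by induction on $l$, exactly as in Step~1 of Lemma~\ref{lem:first}: the base case $l=k+1$ is the secant condition itself, and the inductive step expands the rank-one (SR1) or rank-two (BFGS) update, bounds the extra cross term using the skipping condition to control the denominator $|s_i^{l\top}(z_i^l - H_i^l s_i^l)|$ from below, and bounds the residual mismatch $|(z_i^l, s_i^k) - (s_i^l, z_i^k)|$ by the mean-value theorem together with the Hessian Lipschitz constant $c_3$, giving $|(z_i^l, s_i^k) - (s_i^l, z_i^k)| \leq c_3\,\eta_i^{l+1,k}\|s_i^l\|\,\|s_i^k\|$. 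This is essentially Lemma~4.1 of~\cite{conn1991convergence} transported to the block setting; because the update is symmetric, the separate adjoint estimate analogous to Eq.~\eqref{eq:bound_a} is automatic and needs no independent induction.

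With this lemma in hand, the remainder is identical to the proof of Theorem~\ref{thm:convergence}. Using $(AS4)$ I set $\nu_i^k = \max\{\|w_i^s - w_i^*\| : k\leq s\leq k+l+1\}$ and bound $\eta_i^{k+l+1,k}\leq 2\nu_i^k$. Taylor's theorem gives $\|z_i^j - \nabla_{w_iw_i}^2\mathcal{L}(w_i^*,\lambda_i^*) s_i^j\| \leq c_3\,\nu_i^k\,\|s_i^j\|$ for $k\leq j\leq k+l$, while the lemma gives $\|z_i^j - H_i^{k+l+1} s_i^j\| \leq 2\frac{c_3}{c_1}(\frac{2}{c_1^2}+1)^{l+1}\nu_i^k\|s_i^j\|$. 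The triangle inequality then yields
\[
\left\| \left(H_i^{k+l+1} - \nabla_{w_iw_i}^2\mathcal{L}(w_i^*,\lambda_i^*)\right)\frac{s_i^j}{\|s_i^j\|} \right\| \leq \left(2\frac{c_3}{c_1}\Big(\frac{2}{c_1^2}+1\Big)^{l+1} + c_3\right)\nu_i^k
\]
along the $q_i$ indices $j=k_i^1,\ldots,k_i^{q_i}$ of $(AS6)$. Invoking the uniform linear independence $(AS6)$, the bound $\|(S_{N_i}^{k_i})^{-1}\|\leq 1/c_4$ converts this into $\|(H_i^{k+l+1} - \nabla_{w_iw_i}^2\mathcal{L}(w_i^*,\lambda_i^*)) N_i\| \leq c_5\,\nu_i^k$, and since $\nu_i^k\to 0$ by $(AS4)$ the per-block limit follows. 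The complete-Hessian statement then follows from block-diagonal separability of $\nabla_{ww}^2\mathcal{L}$, exactly as in the passage from Eq.~\eqref{eq:jac_conv} to Eq.~\eqref{eq:jac_conv2}.

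The main obstacle will be verifying the bounded-deterioration estimate for the particular partitioned update in use. For SR1 the induction mirrors Lemma~\ref{lem:first} directly. For BFGS one must additionally handle the second rank-one term and, more delicately, guarantee the curvature condition $s_i^{k\top} z_i^k > 0$ so that each block update is well-defined and stays symmetric positive definite; near a regular KKT point this is ensured by $(AS1)$ together with SOSC, but away from it a damping or skipping safeguard is required, which is precisely what the criterion embedded in $(AS3)$ supplies. A secondary technical point is that the argument needs Lipschitz continuity of $\nabla_{ww}^2\mathcal{L}$, a mild strengthening of $(AS1)$; this plays the role that $(AS2)$ played for the Jacobian and is standard in quasi-Newton Hessian convergence theory.
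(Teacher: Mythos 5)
The paper never actually proves Theorem~\ref{thm:Hess_convergence}: it is stated without a proof environment and justified only by pointing to the partitioned quasi-Newton literature \cite{Griewank1982,Griewank1982a,conn1991convergence,janka2016sr1}. Your reconstruction is precisely the argument those citations (and the paper's own Theorem~\ref{thm:convergence}) implicitly rely on: a block secant pair $(s_i^k, z_i^k)$ with $z_i^k$ a difference of Lagrangian block gradients at frozen multiplier $\lambda^{k+1}$, an SR1-type skipping safeguard, a bounded-deterioration lemma mirroring Lemma~\ref{lem:first}, and then the endgame of Theorem~\ref{thm:convergence} via $(AS6)$, the bound $\Vert (S^{k_i}_{N_i})^{-1}\Vert \le 1/c_4$, and $\nu_i^k \to 0$ from $(AS4)$. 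Your two structural observations are correct and are exactly what make the transfer clean: block-diagonality of $\nabla_{ww}^2\mathcal{L}$ (the Lagrangian couples stages only through terms linear in $x_{i+1}$) makes $z_i^k$ depend on $s_i^k$ alone, and symmetry of $H_i^k$ collapses the forward and adjoint secant conditions of Eq.~\eqref{eq:conds} into a single pair, so only the Step~1-style induction is needed.

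Two caveats, one of which is a genuine gap. First, as you correctly flag, the induction needs Lipschitz continuity of $\nabla_{ww}^2\mathcal{L}$, which $(AS1)$ alone does not give; moreover, because secant pairs at different iterations use different multipliers, the cross term $\left|(z_i^l, s_i^k) - (s_i^l, z_i^k)\right|$ acquires an extra contribution of order $\Vert \lambda^{l+1}-\lambda^{k+1}\Vert$ that your sketch silently drops --- it is controllable (the Lagrangian is affine in $\lambda$, so Lipschitz bounds on the constraint Hessians plus $(AS4)$ suffice), but $\eta_i^{l,k}$ must become a primal-dual quantity for the estimate to close. Second, and more seriously, the route you describe works only for the partitioned SR1 update: the bounded-deterioration mechanism of \cite{conn1991convergence} with the skipping condition is rank-one specific, and for BFGS convergence of the approximation matrices to the true Hessian does \emph{not} follow even under uniform linear independence of the steps (the classical results of Ge and Powell give convergence of the BFGS matrices to \emph{some} limit, not to $\nabla^2 f$; only the Dennis--Mor\'e condition along step directions is available). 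Your sketch concedes the BFGS case is delicate but does not resolve it, and the theorem as stated covers both formulas --- so either the statement should be read as an SR1 result, as the paper's citation pattern suggests, or the BFGS branch requires an argument you (and the paper) do not supply. Within the SR1 branch, your proof is sound and is essentially the intended one.
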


Theorem~\ref{thm:Hess_convergence} on the convergence of a separable quasi-Newton type Hessian approximation method in combination with our main result in Theorem~\ref{thm:convergence} on the block-structured quasi-Newton type Jacobian update formula can be used directly to prove the following result on convergence of the reduced KKT matrix.
\begin{theorem}
	Given $(AS1)$-$(AS6)$ in Assumption~\ref{as:conv}, the following holds
\begin{equation}
\underset{k \rightarrow \infty}{\text{lim}} \left \Vert \begin{bmatrix}
N^{\top}H^k N & N^{\top}A^{k^\top}\\
A^{k} N & \zero
\end{bmatrix} - \begin{bmatrix}
N^{\top}\nabla_{ww}^{2}\mathcal{L}(w^\star, \lambda^\star) N & N^{\top}\frac{\partial F}{\partial w}(w^\star)^\top\\
\frac{\partial F}{\partial w}(w^\star) N & \zero
\end{bmatrix} \right \Vert = 0.
\end{equation}
	\label{thm:KKT_convergence}
\end{theorem}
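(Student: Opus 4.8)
The plan is to treat this statement essentially as a corollary that packages together the two preceding null-space convergence results—Theorem~\ref{thm:convergence} for the Jacobian blocks and Theorem~\ref{thm:Hess_convergence} for the Hessian blocks—by means of an elementary block-matrix norm estimate. First I would introduce the shorthand $\Delta A^k := A^k - \frac{\partial F}{\partial w}(w^\star)$ and $\Delta H^k := H^k - \nabla_{ww}^2 \LL(w^\star, \lambda^\star)$, so that, after subtracting the two block matrices, the quantity whose norm appears in the claim can be written compactly as
\begin{equation}
M^k := \begin{bmatrix} N^\top \Delta H^k\, N & \left(\Delta A^k N\right)^\top \\ \Delta A^k N & \zero \end{bmatrix}. \nonumber
\end{equation}
Here I have used that the upper-right block satisfies $N^\top A^{k^\top} - N^\top \frac{\partial F}{\partial w}(w^\star)^\top = \left(\Delta A^k N\right)^\top$, so that it is exactly the transpose of the lower-left block.

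Second, I would exploit that the null-space matrix $N$ has orthonormal columns, so that $\Vert N \Vert = \Vert N^\top \Vert = 1$ in the spectral norm. This collapses the two-sided sandwich in the Hessian block to a one-sided projection, $\Vert N^\top \Delta H^k\, N \Vert \le \Vert \Delta H^k N \Vert$, and, since the spectral norm is invariant under transposition, it gives $\Vert (\Delta A^k N)^\top \Vert = \Vert \Delta A^k N \Vert$. Thus every block of $M^k$ is controlled by the two null-space-projected error quantities $\Vert \Delta H^k N \Vert$ and $\Vert \Delta A^k N \Vert$, which are precisely the quantities the two cited theorems drive to zero.

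Third, I would apply the standard block-matrix norm bound—using that a symmetric block matrix with zero diagonal blocks and $\Delta A^k N$ in the off-diagonal corner has eigenvalues $\pm\sigma_j(\Delta A^k N)$ and hence spectral norm $\Vert \Delta A^k N \Vert$—to obtain
\begin{equation}
\Vert M^k \Vert \le \Vert N^\top \Delta H^k\, N \Vert + \Vert \Delta A^k N \Vert \le \Vert \Delta H^k N \Vert + \Vert \Delta A^k N \Vert. \nonumber
\end{equation}
Invoking Theorem~\ref{thm:convergence} (which gives $\Vert \Delta A^k N \Vert \to 0$) and Theorem~\ref{thm:Hess_convergence} (which gives $\Vert \Delta H^k N \Vert \to 0$) as $k \to \infty$, both valid under $(AS1)$--$(AS6)$, the right-hand side tends to zero and the claim follows.

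There is no genuine difficulty in this argument: it is merely a repackaging of two already-established limits through routine linear algebra. The only points deserving care are the block-matrix norm estimate just used and the fact that orthonormality of $N$ (i.e.\ $N^\top N = \eye$) is exactly what reduces the two-sided projection $N^\top(\cdot)N$ to the one-sided form $(\cdot)N$ on which Theorems~\ref{thm:convergence} and~\ref{thm:Hess_convergence} provide convergence; beyond these elementary observations no further work is needed.
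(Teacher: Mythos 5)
Your proof is correct and takes essentially the same route as the paper, which establishes Theorem~\ref{thm:KKT_convergence} precisely by combining the two intermediate results, Theorem~\ref{thm:convergence} for $\Vert (A^k - \frac{\partial F}{\partial w}(w^\star))N\Vert \to 0$ and Theorem~\ref{thm:Hess_convergence} for $\Vert (H^k - \nabla_{ww}^2\mathcal{L}(w^\star,\lambda^\star))N\Vert \to 0$, deferring the remaining routine details to~\cite{diehl2010adjoint}. Your elementary block estimate $\Vert M^k \Vert \le \Vert N^\top \Delta H^k N\Vert + \Vert \Delta A^k N\Vert$, justified via $\Vert N\Vert = 1$ from orthonormality and the $\pm\sigma_j(\Delta A^k N)$ eigenvalue structure of the symmetric off-diagonal part, correctly supplies exactly the linear algebra the paper leaves to the cited reference.
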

Based on Theorem~\ref{superlinear}, the above result ensures q-superlinear convergence of the SQP iterates when using a quasi-Newton method to update both the block-structured Hessian and Jacobian matrices.
The proof for Theorem~\ref{thm:KKT_convergence}, based on the intermediate convergence results in Theorem~\ref{thm:convergence} and~\ref{thm:Hess_convergence} can be found in~\cite{diehl2010adjoint}. 


\section{Lifted Collocation Algorithm with Block-TR1 Jacobian Updates}\label{sec:TRD}

As mentioned earlier, implicit integration schemes are often used in direct optimal control because of their relatively high order of accuracy and their improved numerical stability properties~\cite{Hairer1991}. More specifically, problem formulations based on a system of stiff and/or implicit differential or differential-algebraic equations require the use of an implicit integration scheme. Collocation methods are a popular family of implicit Runge-Kutta methods. This section presents a novel lifted collocation algorithm based on tailored block-TR1 Jacobian updates. The standard lifted collocation method with exact Jacobian information was proposed in~\cite{quirynen2017lifted} as a structure-exploiting implementation of direct collocation, even though it shows similarities to multiple shooting.

\subsection{Direct collocation for nonlinear optimal control}
\label{sec:drco}

In direct transcription methods, such as direct collocation~\cite{Betts2010,Biegler1984}, the integration scheme and its intermediate variables are directly made part of the nonlinear optimization problem. In this context, where the simulation routine is defined implicitly as part of the equality constraints in the dynamic optimization problem, one typically relies on implicit integration schemes for their relatively high order of accuracy and improved numerical stability properties. The discrete-time optimal control problem can generally be written as
\begin{subequations} \label{DC-OCP}
	\begin{alignat}{5}
	\underset{X,\,U,\,K}{\text{min}} \quad &\sum_{i=0}^{N-1} l_i(x_i,u_i) + l_N(x_N)&& \label{DC:obj}\\
	\text{s.t.} \quad\;\; & \hat{x}_0 \;= \;x_0,  	\label{DC:initial}\\
	& x_{i} + B_{i} \, K_{i}  \;=\; x_{i+1}  , \quad &&i = 0, \ldots, N-1, \label{DC:Kstep}\\
	& G_i(x_{i},u_{i},K_{i}) \;= \;0,	\quad &&i = 0, \ldots, N-1,	\label{DC:Kvals}\\
	& P_i\, w_i \leq p_i, \quad &&i = 0, \ldots, N, \label{DC:setX}
	\end{alignat}
\end{subequations}
where the additional trajectory $K=[K_0^\top,\ldots, K_{N-1}^\top]^\top$ denotes the intermediate variables of the numerical integration method. These variables are defined implicitly by the equations in~\eqref{DC:Kvals}, such that the continuity condition reads as in Eq.~\eqref{DC:Kstep}. More specifically, the Jacobian $\frac{\partial G_i}{\partial K_{i}}(\cdot)$ will generally be invertible for an integration scheme applied to a well-defined set of differential equations in~\eqref{C:path}. A popular approach of this type is better known as direct collocation~\cite{biegler2010nonlinear}. It relies on a collocation method, a subclass of implicit Runge-Kutta~(IRK) methods~\cite{Hairer1991}, to accurately discretize the continuous time dynamics. In this case, the equations in~\eqref{DC:Kvals} define the collocation polynomial on each control interval $i = 0, \ldots, N-1$.

In a similar fashion as in Section~\ref{sec:DOC}, the adjoint-based SQP method can be applied directly to the direct collocation problem in~\eqref{DC-OCP} by solving the following convex QP subproblem at each iteration
\begin{subequations} \label{SQP-DC}
	\begin{alignat}{4}
	\underset{\Delta W,\,\Delta K}{\text{min}} \; &\sum_{i=0}^{N} \frac{1}{2} \Delta w_{i}^\top H_{i}^k\, \Delta w_{i} \,+\, h_{i}^{c^\top} \bvec \Delta w_{i} \\ \Delta K_{i} \evec &&\label{SQP-DC:obj} \\
	\text{s.t.} \quad\; & \Delta x_0 \;= \; \hat{x}_0 - x_0^k,  	\label{SQP-DC:initial}\\
	& e_{i}^k + \Delta x_{i} + B_i \, \Delta K_{i}  \,= \,  \Delta x_{i+1}, &&i = 0, \ldots, N-1, 	\label{SQP-DC:Kstep}  \\
	& c_{i}^k + D_{i}^k \Delta w_{i} + C_{i}^k \Delta K_{i} \;= \;0, &&i = 0, \ldots, N-1, \label{SQP-DC:Kvals}  \\
	& P_i\, \Delta w_i \leq p_{i}^k, \qquad \quad\qquad &&i = 0,\ldots, N, \label{SQP-DC:path}
	\end{alignat}
\end{subequations}
based on $c_i^k := G_i(w_i^k,K_i^k)$ and the Jacobian approximations $D_{i}^k \approx \frac{\partial G_i}{\partial w_i}(w_i^k,K_i^k)$ and $C_{i}^k \approx \frac{\partial G_i}{\partial K_i}(w_i^k,K_i^k)$. The corresponding gradient correction reads as
\begin{equation}
h_i^c := \bvec \nabla_{w_i} l(w_i^k) + \left(\frac{\partial G_i}{\partial w_i}(w_i^k,K_i^k) - D_{i}^k\right)^\top \omega_i^k \\ \left(\frac{\partial G_i}{\partial K_i}(w_i^k,K_i^k) - C_{i}^k\right)^\top \omega_i^k \evec, \label{eq:grad-DC}
\end{equation}
where $\omega_i^k$ denotes the current value of the multipliers for the nonlinear constraints in~\eqref{DC:Kvals} and $\lambda_i^k$ again denotes the multipliers for the continuity constraints in~\eqref{DC:Kstep}. 

\subsection{Tailored structure exploitation for direct collocation}

As mentioned earlier, the Jacobian matrix $\frac{\partial G_i}{\partial K_{i}}$ for the collocation equations needs to be invertible. Therefore, given an invertible approximation $C_{i}^\o \approx \frac{\partial G_i}{\partial K_i}(w_i^\o,K_i^\o)$, we can rewrite the linearized expression in Eq~\eqref{SQP-DC:Kvals} as follows
\begin{equation}
\Delta K_{i} = -C^{\o^{-1}}_{i} \left( c_{i}^\o + D_{i}^\o \Delta w_{i} \right). \label{eq:K_upd}
\end{equation}
By substituting the above expression for $\Delta K_{i}$ back into the direct collocation structured QP in~\eqref{SQP-DC}, one obtains the condensed but equivalent formulation
\begin{subequations} \label{SQP-LC}
	\begin{alignat}{4}
	\underset{\Delta W}{\text{min}} \quad &\sum_{i=0}^{N} \frac{1}{2} \Delta w_{i}^\top H_{i}^k\, \Delta w_{i} \,+\, \tilde{h}_{i}^{c^\top} \Delta w_{i} &&\label{SQP-LC:obj} \\
	\text{s.t.} \quad & \Delta x_0 \;= \; \hat{x}_0 - x_0^k,  	\label{SQP-LC:initial}\\
	& d_{i}^k + \Delta x_{i}- B_i \, C^{k^{-1}}_{i}D_{i}^k \Delta w_{i} \;= \;\Delta x_{i+1}, \quad &&i = 0, \ldots, N-1, \label{SQP-LC:Kstep}  \\
	& P_i\, \Delta w_i \leq p_{i}^k, \qquad \quad\;\qquad &&i = 0,\ldots, N, \label{SQP-LC:path}
	\end{alignat}
\end{subequations}
where $d_{i}^\o = e_{i}^\o - B_i\,C^{\o^{-1}}_{i} c_{i}^\o$ is defined and the condensed gradient reads as
\begin{equation}
\begin{aligned}
\tilde{h}^{c}_{i} &= \nabla_{w_i} l(w_i^\o) + \left(\frac{\partial G_i}{\partial w_i} - \frac{\partial G_i}{\partial K_i}\, C^{\o^{-1}}_{i} D_{i}^\o \right)^\top \omega_i^{\o},
 \end{aligned} \label{eq:cond_grad}
\end{equation}
given the original gradient correction in~\eqref{eq:grad-DC}.

Note that the resulting QP formulation in Eq.~\eqref{SQP-LC} is of the same problem dimensions and exhibits the same sparsity as the multiple shooting structured QP subproblem in Eq.~\eqref{SQP-MS}. Therefore, state of the art block-structured QP solvers can be used, for which an overview can be found in~\cite{Ferreau2017}.
After solving the condensed QP in~\eqref{SQP-LC}, the collocation variables can be obtained from the expansion step in Eq.~\eqref{eq:K_upd}. Based on the optimality conditions of the original direct collocation structured QP in~\eqref{SQP-DC}, the corresponding Lagrange multipliers can be updated as follows
\begin{equation}
\omega^{\pl}_{i} = \omega^{\o}_{i} - C^{\o^{-\top}}_{i} \left( \frac{\partial G_i}{\partial K_i}^\top \omega^{\o}_{i} + B_i^\top \lambda^{\pl}_{i} \right), \label{eq:lag_upd}
\end{equation}
where $\lambda^{\pl}_{i}$ denote the new values of the Lagrange multipliers for the continuity conditions in~\eqref{SQP-LC:Kstep} or in~\eqref{SQP-DC:Kstep}.

\subsection{Block-TR1 Jacobian update for lifted collocation}

The block-TR1 update formula from Eq.~\eqref{eq:BTR1} can be readily applied to the direct collocation equations, resulting in
\begin{equation}
 [D^{\pl}_{i} \ C^{\pl}_{i}] = [D^{\o}_{i} \ C^{\o}_{i}] + \alpha_i^{\o} \left(y_i^{\o} - [D^{\o}_{i} \ C^{\o}_{i}]\, s_{i}^{\o} \right) \left(\gamma_{i}^{\o^\top} - \sigma_{i}^{\o^\top}[D^{\o}_{i} \ C^{\o}_{i}] \right),
 \label{eq:LC-BTR1}
\end{equation}
where the quantities $\gamma_{i}^{\o^\top} = \sigma_{i}^{\o^\top} \frac{\partial G_i}{\partial (w_i, K_i)}(w^{\pl}_i,K^{\pl}_i)$ and $\sigma_{i}^\o = \omega^{\pl}_{i} - \omega^{\o}_{i}$ are defined. In addition, $s_i^\o := \bvec w_i^{\pl} - w_i^{\o} \\ K_i^{\pl} - K_i^{\o} \evec$ and $y_i^\o = G_i(w^{\pl}_{i},K^{\pl}_{i}) - G_i(w^{\o}_{i},K^{\o}_{i})$ is defined. In order to use this block-TR1 update formula in combination with the lifted collocation method, one needs to be able to efficiently form the condensed QP in Eq.~\eqref{SQP-LC}. For this purpose, we need to avoid the costly computations of the inverse matrix $C^{\o^{-1}}_{i}$ as well as the matrix-matrix multiplication $C^{\o^{-1}}_{i} D_{i}^\o$. In what follows, we present a procedure to directly obtain a rank-one update formula for the inverse matrix $C^{\pl^{-1}}_{i}$ and for the corresponding product $E^{\pl}_{i} := C^{\pl^{-1}}_{i} D^{\pl}_{i}$.


Based on the Sherman-Morrison formula, one can directly update the matrix inverse given the previous invertible approximation $C^{\o^{-1}}_{i} \approx \frac{\partial G_i}{\partial K_i}^{-1}$. Let us first rewrite the block-TR1 update from Eq.~\eqref{eq:LC-BTR1} as follows
	\begin{equation}
		D^{\pl}_{i} = D^{\o}_{i} + \alpha_i^{\o}\, \rho_i^{\o}\, \tau_{\mathrm{D},i}^{\o^\top} \quad \text{and} \quad
		C^{\pl}_{i} = C^{\o}_{i} + \alpha_i^{\o}\, \rho_i^{\o}\, \tau_{\mathrm{C},i}^{\o^\top},
	\end{equation}
where $\rho_i^\o = y_i^\o - [D^{\o}_{i} \ C^{\o}_{i}] s_{i}^\o$ and $[\tau_{\mathrm{D},i}^{\o^\top} \ \tau_{\mathrm{C},i}^{\o^\top}] = \gamma_{i}^{\o^\top} - \sigma_{i}^{\o^\top}[D^{\o}_{i} \ C^{\o}_{i}]$. The Sherman-Morrison formula then reads as
\begin{equation}
C^{\pl^{-1}}_{i} = C^{\o^{-1}}_{i} - \alpha_i^\o \beta_i^\o\, C^{\o^{-1}}_{i} \rho_i^\o \tau_{\mathrm{C},i}^{\o^\top} C^{\o^{-1}}_{i}, \label{eq:C_upd}
\end{equation}
where $\beta_i^\o = \frac{1}{1 \,+\, \alpha_i^\o \tau_{\mathrm{C},i}^{\o^\top} C^{\o^{-1}}_{i} \rho_i^\o}$.
Let us define $\tilde{\rho}_i^\o = C^{\o^{-1}}_{i} \rho_i^\o$ such that we obtain the following update for the condensed Jacobian
\begin{equation}
\begin{aligned}
 E^{\pl}_{i} &= C^{\pl^{-1}}_{i} D^{\pl}_{i} = C^{\o^{-1}}_{i}\left(D^{\o}_{i} + \alpha_i^\o \rho_i^\o \tau_{\mathrm{D},i}^{\o^\top}\right) \\
& - \alpha_i^\o \beta_i^\o C^{\o^{-1}}_{i} \rho_i^\o \tau_{\mathrm{C},i}^{\o^\top}C^{\o^{-1}}_{i}\left(D^{\o}_{i} + \alpha_i^\o \rho_i^\o \tau_{\mathrm{D},i}^{\o^\top}\right)\\
&= E^{\o}_{i} + \alpha_i^\o \tilde{\rho}_i^\o \tau_{\mathrm{D},i}^{\o^\top} - \alpha_i^\o \beta_i^\o \tilde{\rho}_i^\o \tau_{\mathrm{C},i}^{\o^\top}(E^{\o}_{i} + \alpha_i^\o\tilde{\rho}_i^\o \tau_{\mathrm{D},i}^{\o^\top}) \\
&= E^{\o}_{i} + \alpha_i^\o \tilde{\rho}_i^\o \tilde{\tau}_i^{\o^\top},
\end{aligned} \label{eq:LC-BTR1-v2}
\end{equation}
where $\tilde{\tau}_i^{\o^\top} = \tau_{\mathrm{D},i}^{\o^\top} - \beta_i^\o \tau_{\mathrm{C},i}^{\o^\top}(E^{\o}_{i} + \alpha_i^\o \tilde{\rho}_i^\o \tau_{\mathrm{D},i}^{\o^\top})$ has been defined. It is readily seen that the update for $E_i^\o$ in Eq.~\eqref{eq:LC-BTR1-v2} is a rank-one update for the condensed Jacobian matrix. As proposed in~\cite{Hespanhol2019}, corresponding low-rank update formulas for the condensed Hessian can be obtained for the special case of a pseudospectral method based on a global collocation polynomial.

\subsection{Lifted collocation SQP method with block-TR1 Jacobian updates}

It is important to stress that the novel block-TR1 update formula for the condensed Jacobian matrix $E^{\pl}_{i} = C^{\pl^{-1}}_{i} D^{\pl}_{i}$ in Eq.~\eqref{eq:LC-BTR1-v2} provides an efficient manner to directly compute the rank-one update to the matrices in the condensed QP formulation of Eq.~\eqref{SQP-LC}, without the need for a matrix factorization, inversion and without any matrix-matrix multiplications. Instead, the proposed implementation merely requires matrix-vector multiplications and outer products, resulting in a quadratic instead of cubic computational complexity with respect to the number of optimization variables within each control interval. However, this comes at the cost of a slightly increased memory footprint, since additionally the matrices $C^{-1}_{i}$ and $E_i$ need to be stored from one iteration to the next. The implementation of the lifted block-TR1 based SQP method for direct collocation is presented in Algorithm~\ref{alg:lifted_block_TR1}.

\begin{algorithm}[h]
	\caption{One lifted collocation SQP iteration with block-wise TR1 updates.}
	\label{alg:lifted_block_TR1}
	\begin{algorithmic}[1]
    		\Require $w^{\o}_{i} = (x^{\o}_{i},u^{\o}_{i})$, $K^{\o}_{i}$, $\lambda_{i}^{\o}$, $\omega_{i}^{\o}$, $C^{\o}_{i}$, $D^{\o}_{i}$, $C^{\o^{-1}}_{i}$ and $E^{\o}_{i}$.
    		\Statex \texttt{Problem linearization and QP preparation}
    		\State Formulate the QP in~\eqref{SQP-LC} with Jacobian matrices $E^{\o}_{i}$, Gauss-Newton Hessian approximations $H_i^{\o}$ and vectors $d_{i}^\o$, $p_{i}^\o$ and $\tilde{h}^{c}_{i}$ in~\eqref{eq:cond_grad} for $i = 0, \ldots, N-1$. \vspace{1mm}	
    		\Statex \texttt{Computation of Newton-type step direction}
    		\State Solve the QP subproblem in Eq.~\eqref{SQP-LC} to update optimization variables:
    		\Statex $w_i^\pl \,\gets w_i^{\o} + \Delta w_i^{\o}$ and $\lambda_i^\pl \gets \lambda_i^{\o} + \Delta \lambda_i^{\o}$. 	\Comment{full step} \vspace{1mm}
    		\Statex \texttt{Block-wise TR1 Jacobian updates}
    		\For{$i=0,\ldots,N-1$} \textbf{in parallel}
		\State Choose $\alpha_i^{\o} = \alpha_{\mathrm{F},i}^{\o}$ or $\alpha_i^{\o} = \alpha_{\mathrm{A},i}^{\o}$ via some decision rule.
    		\State $K^{\pl}_{i} \;\gets K_i^{\o} - C^{\o^{-1}}_{i}c_{i}^{\o} - E^{\o}_{i} \Delta w_i^{\o}$,
			\State $\omega^{\pl}_{i} \;\,\gets\, \omega^{\o}_{i} - C^{\o^{-\top}}_{i} \left( \frac{\partial G_i}{\partial K_i}^\top \omega^{\o}_{i} + B_i^\top \lambda^{\pl}_{i} \right)$, 	
			\State $D^{\pl}_{i} \;\,\gets D^{\o}_{i} + \alpha_i^{\o}\, \rho_i^{\o}\, \tau_{\mathrm{D},i}^{\o^\top}$ and  $C^{\pl}_{i} \,\gets C^{\o}_{i} + \alpha_i^{\o}\, \rho_i^{\o}\, \tau_{\mathrm{C},i}^{\o^\top}$,
			\State $C^{\pl^{-1}}_{i} \hspace{-1mm}\gets C^{\o^{-1}}_{i} - \alpha_i^\o \beta_i^\o\, \tilde{\rho}_i^\o \tau_{\mathrm{C},i}^{\o^\top} C^{\o^{-1}}_{i}$,
    		\State $E^{\pl}_{i} \;\,\gets E^{\o}_{i} + \alpha_i^\o \tilde{\rho}_i^\o \tilde{\tau}_i^{\o^\top}$.
    		\EndFor
    		\Ensure $w^{\pl}_{i}$, $K^{\pl}_{i}$, $\lambda_{i}^{\pl}$, $\omega_{i}^{\pl}$, $C^{\pl}_{i}$, $D^{\pl}_{i}$, $C^{\pl^{-1}}_{i}$ and $E^{\pl}_{i}$.
    		
	\end{algorithmic}
\end{algorithm}


\subsection{Convergence results for block-TR1 based lifted collocation}

We observe that the TR1 Jacobian updates of the lifted collocation implementation are equivalent to the updates of the direct collocation method. More specifically, the Jacobian approximation matrices are the same at each SQP iteration, regardless of whether we perform the condensing and expansion procedure for the collocation variables in the proposed lifted implementation of Algorithm~\ref{alg:lifted_block_TR1}. Therefore, the convergence properties shown in the previous section also hold for both the standard and lifted collocation based block-TR1 SQP method. 

\begin{corollary}
If the assumptions of Theorem~\ref{AS-conv} and Assumption~\ref{as:conv} hold, then the lifted collocation SQP method with block-wise TR1 Jacobian updates in Algorithm~\ref{alg:lifted_block_TR1}, with a Gauss-Newton Hessian approximation, produces iterates $\{w^{k}, \lambda^{k}, \mu^{k}\}$ that converge q-linearly within a neighbourhood around the KKT point $(w^{*}, \lambda^{*}, \mu^{*})$ of the NLP.
\end{corollary}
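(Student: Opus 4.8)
The plan is to prove the corollary by an \emph{equivalence argument}: I would show that the iterates generated by the lifted implementation in Algorithm~\ref{alg:lifted_block_TR1} coincide exactly with those generated by applying the block-TR1 SQP method directly to the direct collocation NLP in~\eqref{DC-OCP}, and then invoke the convergence theory of Section~\ref{sec:CONV} for the latter. First I would establish that the condensing step~\eqref{eq:K_upd}, together with the expansion and the multiplier update~\eqref{eq:lag_upd}, constitutes an \emph{exact} reformulation of the direct collocation structured QP~\eqref{SQP-DC} into the condensed QP~\eqref{SQP-LC}. Since $C_i^k \approx \frac{\partial G_i}{\partial K_i}$ is invertible, eliminating $\Delta K_i$ via~\eqref{eq:K_upd} is a linear variable substitution that leaves the primal step $\Delta w_i$, the continuity multipliers $\lambda_i^{\pl}$, and, after the expansion, the recovered variables $\Delta K_i$ and the collocation multipliers $\omega_i^{\pl}$ unchanged relative to solving~\eqref{SQP-DC} directly. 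Consequently the full primal-dual iterate $(w^k,K^k,\lambda^k,\omega^k,\mu^k)$ is identical under both implementations.

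Second, I would verify that the Jacobian approximations also agree across the two implementations. The rank-one update~\eqref{eq:LC-BTR1-v2} for the condensed Jacobian $E_i^{\pl}=(C_i^{\pl})^{-1}D_i^{\pl}$ is obtained from the block-TR1 update~\eqref{eq:LC-BTR1} of $[D_i\ C_i]$ through the Sherman-Morrison identity~\eqref{eq:C_upd}, which is an exact algebraic rewriting and not an approximation. Hence the matrices $[D_i^k\ C_i^k]$, and therefore the effective condensed Jacobian block $-B_i E_i^k$ appearing in~\eqref{SQP-LC:Kstep}, coincide at every iteration $k$ with those obtained by applying the block-TR1 formula directly to the direct collocation problem. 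This closes the equivalence: Algorithm~\ref{alg:lifted_block_TR1} produces exactly the sequence $\{w^k,\lambda^k,\mu^k\}$ that the block-TR1 SQP scheme of Algorithm~\ref{alg:block_TR1} would produce on~\eqref{DC-OCP}.

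Third, I would apply the results of Section~\ref{sec:CONV} to the direct collocation NLP. Problem~\eqref{DC-OCP} inherits the block-diagonal Hessian and block-structured constraint Jacobian of the multiple-shooting formulation, with the collocation blocks $[D_i\ C_i]\approx\frac{\partial G_i}{\partial(w_i,K_i)}$ playing the role of the block Jacobians, and the block-TR1 updates satisfy the same secant conditions~\eqref{eq:conds} and skipping criteria~\eqref{skip1}--\eqref{skip2}. Hence Theorem~\ref{thm:convergence} guarantees that these block Jacobian approximations converge to the exact Jacobian within the null space of the active inequalities; combined with the Gauss-Newton Hessian, Lemma~\ref{lem:spectrum} and Corollary~\ref{cor:rate} then show that the asymptotic contraction rate equals the exact Gauss-Newton rate $\kappa_{\mathrm{GN}}^\star<1$. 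Theorem~\ref{AS-conv} thereby yields q-linear local convergence together with active-set stability for the direct collocation iterates, and the equivalence established above transfers this conclusion verbatim to the lifted implementation.

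The main obstacle I expect is verifying that Assumption~\ref{as:conv}, and in particular the uniform linear independence condition $(AS6)$, carries over to the enlarged collocation variable space $(w_i,K_i)$, because the inequality path constraints~\eqref{DC:setX} act only on $w_i$ and leave the $K_i$ components unconstrained, so the null-space matrix and the projected steps must be re-examined in this larger space. A careful treatment would exploit that the collocation variables $K_i$ are pinned by the invertibility of $\frac{\partial G_i}{\partial K_i}$, so that the effective degrees of freedom and the projected-step geometry reduce to those of the multiple-shooting formulation, allowing $(AS6)$ to be inherited and the convergence of Theorem~\ref{thm:convergence} to apply to the blocks $[D_i\ C_i]$.
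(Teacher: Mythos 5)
Your proposal is correct and follows essentially the same route as the paper's own (much terser) proof: establish the exact equivalence of the lifted and direct collocation iterates via the condensing/expansion in Eq.~\eqref{eq:K_upd} and the Sherman--Morrison rewriting, observe that the direct collocation QP~\eqref{SQP-DC} is a special case of the formulation~\eqref{SQP-MS} with additional intermediate variables, and then invoke Theorem~\ref{thm:convergence} together with Theorem~\ref{AS-conv}. Your closing concern about $(AS6)$ in the enlarged $(w_i,K_i)$ space is legitimate diligence, but note that the corollary's hypothesis simply assumes Assumption~\ref{as:conv} holds for the formulation at hand, so no separate verification is required for the stated result.
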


\begin{proof}
	
It follows from the equivalence of the SQP iterations between the direct and lifted collocation formulation based on the numerical condensing and expansion of the collocation variables in Eq.~\eqref{eq:K_upd}. In particular, the direct collocation QP subproblem~\eqref{SQP-DC} is a special case of the QP formulation in~\eqref{SQP-MS}, with additional intermediate variables and corresponding equations. The block-TR1 Jacobian matrix convergence results of Theorem~\ref{thm:convergence} therefore hold for direct collocation as well as for the proposed lifted implementation in Algorithm~\ref{alg:lifted_block_TR1}. 

\end{proof}


\section{Numerical Case Studies of Nonlinear Model Predictive Control}
\label{sec:caseStudies}

In this section, we illustrate numerically how the proposed block-TR1 SQP method can be used in the context of nonlinear MPC using an algorithm implementation based on the real-time iterations~(RTI), as proposed originally in~\cite{Diehl2005b} with exact Jacobian information. The approach is based on one block-TR1 SQP iteration per control time step, and using a continuation-based warm starting of the state and control trajectories from one time step to the next~\cite{Hespanhol2018}. Each iteration consists of two steps:
\begin{enumerate}
	\item \emph{Preparation phase}: discretize and linearize the system dynamics, linearize the remaining constraint functions, and evaluate the quadratic objective approximation to build the optimal control structured QP subproblem.
	\item \emph{Feedback phase}: solve the QP to update the current values for all optimization variables and obtain the next control input to apply feedback to the system.
\end{enumerate}
The proposed block-wise TR1 based Jacobian updates in Algorithm~\ref{alg:block_TR1} and~\ref{alg:lifted_block_TR1} become part of the preparation step, in order to construct the linearized continuity equations. Therefore, the feedback step remains unchanged and the Jacobian updates do not affect the computational delay between obtaining the new state estimate and applying the next control input value to the system. 


We validate the closed-loop performance of these novel block-TR1 based RTI algorithms by presenting numerical simulation results for two NMPC case studies. Motivated by real embedded control applications, we present the computation times for the proposed NMPC algorithms using the ARM Cortex-A53 processor in the Raspberry Pi~3. The block-sparse QP solution in the feedback phase will be carried out by the primal active-set method, called \texttt{PRESAS}, that was recently presented in~\cite{PRESAS}.
 


\subsection{Nonlinear MPC for a chain of spring-connected masses}

In our first case study, the control task is to return a chain of $\nm$ masses connected with springs to its steady state, starting from a perturbed initial configuration, without hitting a wall that is placed close to the equilibrium state configuration. The mass at one end is fixed, while the control input $u(t) \in \R^{3}$ to the system is the direct force applied to the mass at the other end of the chain. The state of each free mass $x^j := [p^{j^\top},v^{j^\top}]^\top \in \R^{6}$ consists in its position $p^j := [p_x^j,p_y^j,p_z^j]^\top \in \R^{3}$ and velocity $v^j \in \R^{3}$ for $j=1,\ldots,\nm-1$, such that the dynamic system can be described by the concatenated state vector $x(t) \in \R^{6(\nm-1)}$.
Similar to the work in~\cite{quirynen2017lifted}, the nonlinear chain of masses can be used to validate the computational performance and scaling of an optimal control algorithm for a range of numbers of masses $\nm$, resulting in a range of different problem dimensions. 
The nonlinear system dynamics and the resulting optimal control problem formulation can be found in~\cite{Wirsching2006}.

\subsubsection{Local convergence: Gauss-Newton SQP with block-TR1 Jacobian updates}

We illustrate the impact of the proposed block-wise TR1 Jacobian updates on the local convergence rate of the resulting inexact adjoint-based SQP algorithm. Figure~\ref{figtime:one} shows a comparison of the convergence between different SQP variants for the solution of the nonlinear chain of masses OCP. In particular, the comparison includes the exact Jacobian-based SQP method, the standard dense TR1 update~\cite{griewank2002constrained}, and the good and bad Broyden update formulas~\cite{broyden2000discovery}. For the proposed block-TR1 based SQP implementation, the figure illustrates both the adjoint and forward variant by using, respectively, the scaling factor in~\eqref{eq:alpha1} and~\eqref{eq:alpha2}. The performance of the block-TR1 method is additionally illustrated for an implementation where $\alpha_i$ is chosen dynamically, depending on which of the two variants results in the largest denominator in order to avoid the need to skip a block-wise Jacobian update.

It is known that an exact Jacobian-based SQP method with Gauss-Newton type Hessian approximation results in locally linear convergence, for which the asymptotic contraction rate depends on the optimal residual value in the least squares type objective~\cite{nocedal2006numerical}. It can be observed in Figure~\ref{figtime:one} that all three variants of the proposed block-wise TR1 update formula result in the same asymptotic rate of convergence as for the exact Jacobian based algorithm, i.e., the rate of convergence appears to be the same close to the local solution of the NLP. Note that this confirms numerically the result of Corollary~\ref{cor:rate}.
In addition, the block-wise TR1 Jacobian updates result in a smaller total number of SQP iterations, compared to the standard dense Jacobian update formulas for the particular example in Figure~\ref{figtime:one}. In the latter case, the direct application of a standard rank-one update formula destroys the block sparsity in the QP subproblems and is therefore computationally unattractive. 

\begin{figure*}[t]
	\centering 
	\includegraphics[trim={1.5in 1.7in 0.in 0.0in},clip,scale=0.33]{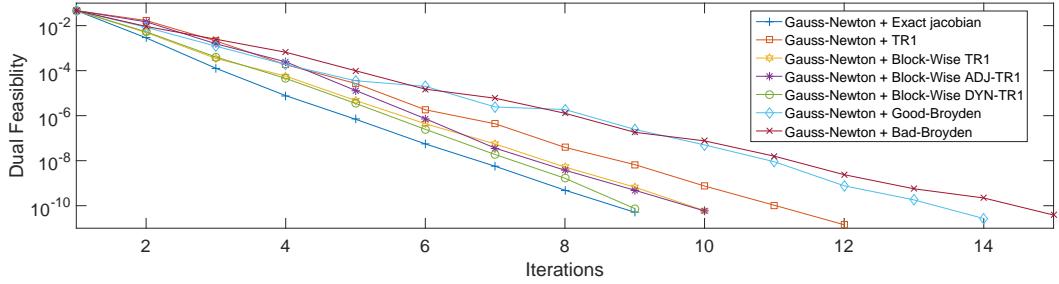}
	\caption{Local convergence analysis: comparison between different variants of the inexact adjoint-based SQP method as described in Algorithm~\ref{alg:lifted_block_TR1} based on lifted collocation, using either the exact Jacobian or different quasi-Newton type Jacobian update formulas for the nonlinear chain of 6 masses. }
	\label{figtime:one}
\end{figure*}

\subsubsection{Computational timing results for block-TR1 based lifted collocation}

\begin{figure}[tpb]
	\vspace{-1mm}
	\subfigure{\includegraphics[trim={0.4in 0in 0.3in 0in},clip,scale=0.33]{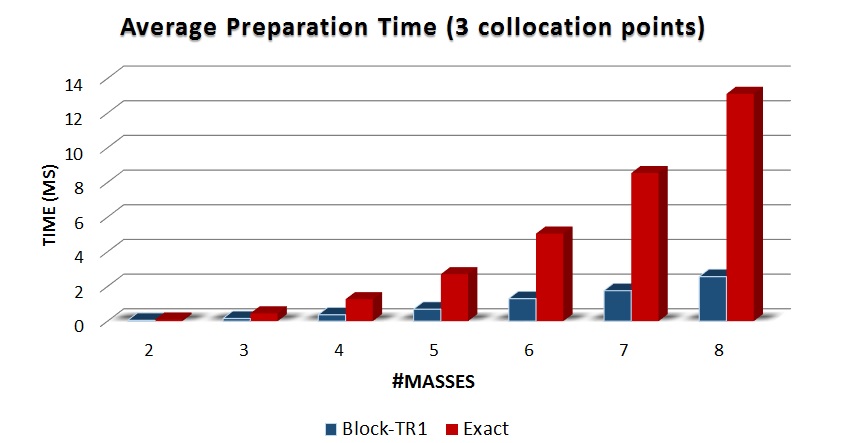}}
	\subfigure{\includegraphics[trim={0.4in 0in 0.3in 0in},clip,scale=0.33]{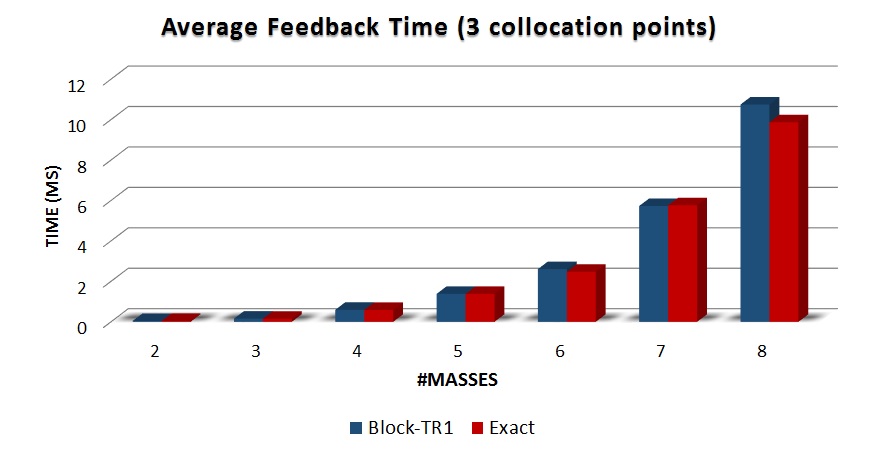}}
	\vspace{-5mm}
	\caption[blabla]{\small{Comparison of the average preparation and feedback computation times~(in ms): block-TR1 versus exact Jacobian based lifted collocation SQP method.} \footnotemark} 
	\label{figtime:two}
\end{figure}

Figure~\ref{figtime:two} illustrates the computation times of both the preparation and feedback steps of an NMPC implementation for a chain of $\nm = 2, \ldots, 8$ masses, using the lifted collocation based SQP method in Algorithm~\ref{alg:lifted_block_TR1}. It can be observed that the preparation time scales quadratically with the number of states for the block-TR1 implementation, instead of the cubic computational complexity when using the exact Jacobian. More specifically, the Jacobian evaluation, the factorization and matrix-matrix multiplications are replaced by adjoint differentiation sweeps and matrix-vector operations in Algorithm~\ref{alg:lifted_block_TR1}. On the other hand, the feedback time remains essentially the same because, after the linearization and QP preparation, both approaches lead to the solution of a similarly structured QP in Eq.~\eqref{SQP-MS} or~\eqref{SQP-LC}.


\footnotetext{The computation times in Figure~\ref{figtime:two} have been obtained using an Intel~i7-7700k processor @~4.20~GHz on Windows~10 with 64~GB of RAM.}


\begin{table}[tpb]
	\setlength{\tabcolsep}{0.8em}
	\tbl{Average computation times~(in ms) for nonlinear MPC on a chain of $\nm=6$ masses, i.e., $30$ differential states~($4$ Gauss collocation nodes versus $10$ steps of RK4).}
	{\begin{tabular}{l c c c c c c  }
		\toprule
		\multicolumn{1}{c}{ }   	& \multicolumn{3}{c}{Explicit~(RK4 in Alg.~\ref{alg:block_TR1})}   	& \multicolumn{3}{c}{Implicit~(GL4 in Alg.~\ref{alg:lifted_block_TR1})}     	      \\
		\cmidrule{2-3} \cmidrule{5-6}
		& exact & block-TR1 &			& exact & block-TR1 &						 \\
		\midrule
		Linearization     & $32.36$  &  $5.33$  &  $\mathbf{16}${\bf\%}  &  $291.37$  &  $35.99$  &  $\mathbf{12}${\bf\%} \\
		QP solution       & $23.22$  &  $37.82$  &         &  $26.33$  &  $27.86$ &           \\
		\midrule
		Total RTI step    & $56.39$  &  $43.99$  &  $78\%$  &  $318.58$  &  $64.69$  &  $20\%$ \\
		\bottomrule
	\end{tabular}}
\label{tab:chain_results}
\end{table}

Table~\ref{tab:chain_results} provides a more detailed comparison between the exact Jacobian and the proposed block-TR1 variant of the real-time iterations for NMPC, using an ARM Cortex-A53 processor. The table shows these results for both the explicit Runge-Kutta method of order~4~(RK4) in combination with Algorithm~\ref{alg:block_TR1} and using the implicit 4-stage Gauss-Legendre~(GL4) method within Algorithm~\ref{alg:lifted_block_TR1}. The proposed block-TR1 algorithm results in a computational speedup of about factor $6-8$ for the problem linearization step. In order to obtain a relatively fair comparison, the number of integration steps for RK4 has been chosen such that the numerical accuracy is close to that of the 4-stage GL method. However, since the system dynamics for the chain of masses are non-stiff, an explicit integration scheme should instead typically perform better in terms of computational efficiency.

\subsection{Nonlinear MPC for vehicle control on a snow-covered road}

\begin{figure*}
\centering
\includegraphics[trim={1.2in 0in 0.3in 0in},clip,scale=0.33]{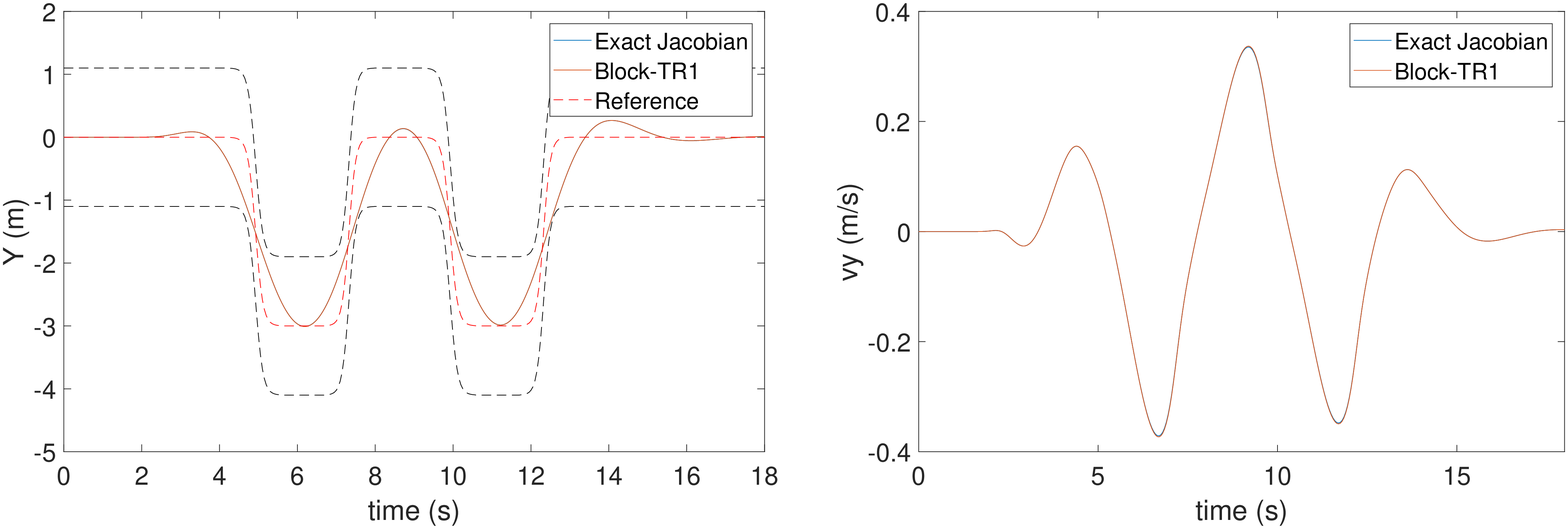}
\caption{Closed-loop NMPC performance of two double lane changes at a vehicle speed of $10$~m/s on snow-covered road conditions, using model parameters from~\cite{Berntorp2014a} in the nonlinear OCP formulation~\cite{steeringNMPC}.}
\label{figtime:four}
\end{figure*}


Our second case study considers nonlinear MPC for real-time vehicle control as motivated by automotive applications in autonomous driving. The nonlinear optimal control problem formulation is based on single-track vehicle dynamics with a Pacejka-type tire model~\cite{steeringNMPC}. The experimentally validated model parameters can be found in~\cite{Berntorp2014a}. As often the case in practice, these vehicle dynamics are rather stiff such that an implicit integration scheme should preferably be used. Therefore, it forms an ideal case study for the proposed lifted collocation based RTI method of Algorithm~\ref{alg:lifted_block_TR1}. Let us perform the closed-loop NMPC simulations as presented in~\cite{steeringNMPC}, but using the proposed block-TR1 based RTI implementation. We carried out numerical simulations for two successive double lane changes on snow-covered road conditions. The resulting closed-loop trajectories for both the exact Jacobian and the block-TR1 method are indistinguishable from each other, as illustrated in Figure~\ref{figtime:four}.


\begin{table}[tpb]
	\setlength{\tabcolsep}{0.8em}
	\tbl{Average computation times~(in ms) for vehicle control based on a single-track vehicle model within NMPC~($4$ Gauss collocation nodes versus $30$ steps of RK4).}
	{\begin{tabular}{l  c c c  c c c  }
			\toprule
			\multicolumn{1}{c}{ }   	& \multicolumn{3}{c}{Explicit~(RK4 in Alg.~\ref{alg:block_TR1})}   	& \multicolumn{3}{c}{Implicit~(GL4 in Alg.~\ref{alg:lifted_block_TR1})}     	      \\
			\cmidrule{2-3} \cmidrule{5-6}
			& exact & block-TR1 &			& exact & block-TR1 &						 \\
			\midrule
			Linearization   & $106.73$  &  $75.78$  &  $\mathbf{71}${\bf\%}  &  $52.22$  &  $18.27$  &  $\mathbf{35}${\bf\%} \\
			QP solution     & $4.46$  &  $4.51$  &    &  $4.59$  &  $4.72$  &   \\
			\midrule
			Total RTI step  & $111.79$  &  $80.94$  &  $72\%$  &  $57.43$  &  $23.64$  &  $41\%$ \\
			\bottomrule
	\end{tabular}}
	\label{tab:steering_results}
\end{table}
 
The corresponding computation times on the ARM Cortex-A53 processor are illustrated in detail by Table~\ref{tab:steering_results}. Because of the relatively stiff system dynamics, the proposed block-TR1 lifted collocation method from Algorithm~\ref{alg:lifted_block_TR1} becomes attractive and additionally provides a computational speedup of about factor $3$ over the standard exact Jacobian based implementation. Note that, even though the Raspberry Pi~3 is not an embedded processor by itself, it uses an ARM core of the same type as those that are used by multiple high-end automotive microprocessors. Therefore, the proposed algorithm implementation as well as the corresponding numerical results form a motivation for real-time embedded control applications that involve a relatively large, implicit and/or stiff system of differential equations.


\section{Conclusions and outlook}
\label{sec:concl}

In this paper, we proposed a block-wise sparsity preserving two-sided rank-one~(TR1) Jacobian update for an adjoint-based inexact SQP method to efficiently solve the nonlinear optimal control problems arising in NMPC. We proved local convergence for the block-structured quasi-Newton type Jacobian matrix updates. In case of a Gauss-Newton based SQP implementation, we additionally showed that the asymptotic rate of contraction remains the same. We also presented how this approach can be implemented efficiently in a tailored lifted collocation framework, in order to avoid matrix factorizations and matrix-matrix multiplications. Finally, we illustrated the local convergence properties as well as the computational complexity results numerically for two nonlinear MPC case studies.
The effect of the presented contraction properties on the convergence and closed-loop stability of the block-TR1 based real-time iterations is an important topic that is part of ongoing research.

\bibliographystyle{tfs}
\bibliography{IEEEabrv,TR1b}

\end{document}